\newtheorem{algorithm}{Weak Galerkin Algorithm}
\newcommand{\bq}{{\bf q}}
\newcommand{\bn}{{\bf n}}
\newcommand{\bx}{{\bf x}}
\newcommand{\bt}{{\bf t}}
\def\T{{\mathcal T}}
\def\E{{\mathcal E}}
\def\Q{{\mathbb Q}}
\def\l{{\langle}}
\def\r{{\rangle}}
\def\bn{{\bf n}}
\def\bq{{\bf q}}
\def\bbQ{\mathbb{Q}}
\newcommand{\pT}{{\partial T}}
\def\3bar{{|\hspace{-.02in}|\hspace{-.02in}|}}
\def\a#1{\begin{align*}#1\end{align*}}  
\def\p#1{\begin{pmatrix}#1\end{pmatrix}} 
  \numberwithin{equation}{section}
\numberwithin{table}{section} \numberwithin{figure}{section}
\title{A stabilizer free weak Galerkin element method with supercloseness of order two}
\author{Ahmed AL-Taweel\thanks{Department of Mathematics and Statistics,
		University of Arkansas at Little Rock, Little Rock, AR, 72204 (asaltaweel@ualr.edu)}
\and  Xiaoshen Wang\thanks{Department of
Mathematics, University of Arkansas at Little Rock, Little Rock, AR
72204 (xxwang@ualr.edu)}\and Xiu Ye\thanks{Department of
Mathematics, University of Arkansas at Little Rock, Little Rock, AR
72204 (xxye@ualr.edu). This research was supported in part by
National Science Foundation Grant DMS-1620016.}
\and
Shangyou Zhang\thanks{Department of
Mathematical Sciences, University of Delaware, Newark, DE 19716 (szhang@udel.edu).}
}
\begin{document}

\maketitle

\begin{abstract}
The weak Galerkin (WG) finite element method is an effective and flexible general numerical techniques for solving partial diﬀerential equations. A  simple weak Galerkin finite element method is introduced for second order elliptic problems. First we have proved that stabilizers are no longer needed for this WG element. Then we have proved  the supercloseness of order two for the WG finite element solution. The numerical results confirm the theory.
\end{abstract}

\begin{keywords}
weak Galerkin, finite element methods, weak gradient, second-order
elliptic problems, stabilizer free, supercloseness.
\end{keywords}

\begin{AMS}
Primary: 65N15, 65N30; Secondary: 35J50
\end{AMS}
\pagestyle{myheadings}

\section{Introduction}\label{Section:Introduction}

The weak Galerkin finite element method is an effective and flexible numerical techniques for solving partial diﬀerential equations.
It is a natural extension of the standard Galerkin finite element method where classical derivatives were substituted by weakly defined
derivatives on functions with discontinuity. The WG method was first introduced in \cite{wy,wymix} and then has been applied to solve various partial differential equations
\cite{hmy,Lin2014,lyzz,mwy-helm,mwy-biha,mwy-soe,mwyz-maxwell, mwyz-interface,mwy-brinkman,Shields,ww-div-curl,wy-stokes}.

The main idea of weak Galerkin finite
element methods is the use of weak functions and their corresponding
 weak derivatives in algorithm design. For the second order
elliptic equation, weak functions have the form of $v=\{v_0,v_b\}$
with $v=v_0$ inside of each element and $v=v_b$ on the boundary of
the element. Both $v_0$ and $v_b$ can be approximated by polynomials
in $P_\ell(T)$ and $P_s(e)$ respectively, where $T$ stands for an
element and $e$ the edge or face of $T$, $\ell$ and $s$ are
non-negative integers with possibly different values. Weak
derivatives are defined for weak functions in the sense of
distributions.  For example, one may approximate a weak gradient in the polynomial space
$[P_m(T)]^d$. Various combination of $(P_\ell(T),P_s(e),[P_m(T)]^d)$
leads to different  weak Galerkin methods tailored for
specific partial differential equations.

A stabilizing/penalty term is often used in finite element methods with discontinuous approximations to enforce connection of discontinuous functions across element boundaries.
Removing stabilizers from weak Galerkin finite element methods will simplify formulations
and reduce programming complexity significantly. Stabilizer free WG finite element methods have been studied in \cite{yz, aw, yzz}. The idea is to increase the connectivity of a weak function cross element boundary by raising the degree of polynomials for computing weak derivatives. In \cite{yz}, it has been proved that for a WG element $(P_k(T),P_k(e),[P_{j}(T)]^d)$, the condition of $j\ge k+n-1$  guarantees a stabilizer free WG method, where $n$ is the number of edges/faces of an element.

In this paper,  a WG finite element $(P_k(T),P_{k+1}(e),[P_{k+1}(T)]^2)$ is investigated for second order elliptic problems. This new WG finite element leads to a stabilizer free weak Galerkin formulation. In addition, we have proved order two supercloseness  for the WG finite element solution, i.e. the WG solution approaches to the $L^2$ projection of the true solution with the convergence rates two order higher than the optimal convergence rate in both an energy norm and the $L^2$ norm.  The numerical results show high accuracy of the WG method and confirm our theory.

For simplicity, we demonstrate the idea by using the second order elliptic problem that seeks an
unknown function $u$ satisfying
\begin{eqnarray}
-\nabla\cdot (a\nabla u)&=&f\quad \mbox{in}\;\Omega,\label{pde}\\
u&=&g\quad\mbox{on}\;\partial\Omega,\label{bc}
\end{eqnarray}
where $\Omega$ is a polytopal domain in $\mathbb{R}^2$, $\nabla u$ denotes the gradient of
the function $u$, and $a$ is a symmetric
$2\times 2$ matrix-valued function in $\Omega$. We shall assume that
there exist two positive numbers $\lambda_1,\lambda_2>0$ such that
\begin{equation}\label{ellipticity}
\lambda_1 \xi^t\xi\le \xi^ta\xi\le \lambda_2 \xi^t\xi,\qquad\forall
\xi\in\mathbb{R}^2.
\end{equation}
Here $\xi$ is understood as a column vector and $\xi^t$ is the transpose
of $\xi$.

The paper is organized as follows.  In Section 2, we shall describe
a new WG scheme. Section 3 will discussion the well posedness of
the WG scheme. The error analysis for the WG solutions in an energy norm and in the $L^2$ norms will be investigated in Section 4 and Section 5 respectively.
In Section 7, we shall present some numerical results that confirm the theory developed in earlier sections.
Finally, technical proof of Lemma 3.1 will be presented in Appendix.

\section{Weak Galerkin Finite Element Schemes}

Let ${\cal T}_h$ be a shape regular partition of the domain $\Omega$ consisting of
triangles. Denote by ${\cal E}_h$
the set of all edges in ${\cal T}_h$, and let ${\cal
E}_h^0={\cal E}_h\backslash\partial\Omega$ be the set of all
interior edges. For every element $T\in \T_h$, we
denote by $h_T$ its diameter and mesh size $h=\max_{T\in\T_h} h_T$
for ${\cal T}_h$.

First, we adopt the following notations,
\begin{eqnarray*}
(v,w)_{\T_h} &=&\sum_{T\in\T_h}(v,w)_T=\sum_{T\in\T_h}\int_T vw d\bx,\\
 \l v,w\r_{\partial\T_h}&=&\sum_{T\in\T_h} \l v,w\r_\pT=\sum_{T\in\T_h} \int_\pT vw ds.
\end{eqnarray*}

For a given integer $k \ge 1$, define a weak Galerkin finite
element space associated with $\T_h$ as follows
\begin{equation}\label{vhspace}
V_h=\{v=\{v_0,v_b\}:\; v_0|_T\in P_k(T),\ v_b|_e\in P_{k+1}(e),\ e\subset\pT,  T\in \T_h\}
\end{equation}
and its subspace $V_h^0$ is defined as
\begin{equation}\label{vh0space}
V^0_h=\{v: \ v\in V_h,\  v_b=0 \mbox{ on } \partial\Omega\}.
\end{equation}
We would like to emphasize that any function $v\in V_h$ has a single
value $v_b$ on each edge $e\in\E_h$.

For $v=\{v_0,v_b\}\in V_h+H^1(\Omega)$, a weak gradient $\nabla_wv$ is a piecewise vector valued polynomial such that on each $T\in\T_h$,  $\nabla_w v \in [P_{k+1}(T)]^2$ satisfies
\begin{equation}\label{d-d}
  (\nabla_w v, \bq)_T = -(v_0, \nabla\cdot \bq)_T+ \langle v_b, \bq\cdot\bn\rangle_{\partial T}\qquad
   \forall \bq\in [P_{k+1}(T)]^2.
\end{equation}
In the above equation, we let $v_0=v$ and $v_b=v$ if $v\in H^1(\Omega)$.

Let $Q_0$ and $Q_b$ be the two element-wise defined $L^2$ projections onto $P_k(T)$ and $P_{k+1}(e)$ with $e\subset\pT$ respectively for each $T\in\T_h$. Define $Q_hu=\{Q_0u,Q_bu\}\in V_h$. Let $\Q_h$ be the element-wise defined $L^2$ projection onto $[P_{k+1}(T)]^2$ on each element $T\in\T_h$.

\begin{algorithm}
A numerical approximation for (\ref{pde})-(\ref{bc}) can be
obtained by seeking $u_h=\{u_0,u_b\}\in V_h$
satisfying $u_b=Q_bg$ on $\partial\Omega$ and  the following equation:
\begin{equation}\label{wg}
(a\nabla_wu_h,\nabla_wv)_{\T_h}=(f,\; v_0) \quad\forall v=\{v_0,v_b\}\in V_h^0.
\end{equation}
\end{algorithm}

We will need the following lemma in the error analysis.

\begin{lemma}
Let $\phi\in H^1(\Omega)$, then on any $T\in\T_h$,
\begin{eqnarray}
\nabla_w (Q_h \phi)&=& \Q_h\nabla  \phi,\label{key}\\
\nabla_w\phi &=&\Q_h\nabla\phi.\label{key1}
\end{eqnarray}
\end{lemma}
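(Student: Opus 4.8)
The strategy is to verify both identities directly from the definition of the weak gradient in \eqref{d-d}, using the defining property of the $L^2$ projections $Q_h$ and $\Q_h$. I will prove \eqref{key1} first, since \eqref{key} will then follow by applying the same computation to $Q_h\phi$ and recognizing that the projections leave the relevant inner products unchanged.

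\begin{proof}[Proof plan]
First I would establish \eqref{key1}. Fix $T\in\T_h$ and an arbitrary test function $\bq\in[P_{k+1}(T)]^2$. Since $\phi\in H^1(\Omega)$, the definition \eqref{d-d} sets $\phi_0=\phi$ and $\phi_b=\phi$, so
\[
(\nabla_w\phi,\bq)_T = -(\phi,\nabla\cdot\bq)_T + \langle \phi,\bq\cdot\bn\rangle_{\partial T}.
\]
The plan is to run integration by parts backwards on the right-hand side: Green's formula gives $-(\phi,\nabla\cdot\bq)_T+\langle\phi,\bq\cdot\bn\rangle_{\partial T}=(\nabla\phi,\bq)_T$. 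Hence $(\nabla_w\phi,\bq)_T=(\nabla\phi,\bq)_T$ for every $\bq\in[P_{k+1}(T)]^2$. Because $\nabla_w\phi\in[P_{k+1}(T)]^2$ by construction, and $\Q_h\nabla\phi$ is by definition the unique element of $[P_{k+1}(T)]^2$ satisfying $(\Q_h\nabla\phi,\bq)_T=(\nabla\phi,\bq)_T$ for all such $\bq$, the two polynomials must coincide, which is \eqref{key1}.

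Next I would prove \eqref{key}. Again fix $T$ and $\bq\in[P_{k+1}(T)]^2$, and apply \eqref{d-d} to the weak function $Q_h\phi=\{Q_0\phi,Q_b\phi\}$:
\[
(\nabla_w(Q_h\phi),\bq)_T = -(Q_0\phi,\nabla\cdot\bq)_T + \langle Q_b\phi,\bq\cdot\bn\rangle_{\partial T}.
\]
The key observation is that $\nabla\cdot\bq\in P_k(T)$ whenever $\bq\in[P_{k+1}(T)]^2$, so the defining property of the projection $Q_0$ onto $P_k(T)$ gives $(Q_0\phi,\nabla\cdot\bq)_T=(\phi,\nabla\cdot\bq)_T$; likewise $\bq\cdot\bn|_e\in P_{k+1}(e)$ on each edge $e\subset\partial T$, so the defining property of $Q_b$ gives $\langle Q_b\phi,\bq\cdot\bn\rangle_{\partial T}=\langle \phi,\bq\cdot\bn\rangle_{\partial T}$. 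Substituting these back, the right-hand side becomes exactly $-(\phi,\nabla\cdot\bq)_T+\langle\phi,\bq\cdot\bn\rangle_{\partial T}$, which by the computation in the first step equals $(\nabla\phi,\bq)_T=(\Q_h\nabla\phi,\bq)_T$. Since this holds for all $\bq\in[P_{k+1}(T)]^2$ and both $\nabla_w(Q_h\phi)$ and $\Q_h\nabla\phi$ live in that space, they are equal, giving \eqref{key}.

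The one point requiring genuine care — and the main thing to get right — is the degree bookkeeping in the second step: the argument hinges on the fact that $\nabla\cdot\bq$ has degree at most $k$ (matching the range $P_k(T)$ of $Q_0$) and that $\bq\cdot\bn$ restricted to an edge has degree at most $k+1$ (matching the range $P_{k+1}(e)$ of $Q_b$). This is precisely the compatibility built into the choice of spaces $(P_k(T),P_{k+1}(e),[P_{k+1}(T)]^2)$, and it is what makes the projections act as the identity against these particular test quantities. Everything else is routine application of the projection definitions and Green's formula.
\end{proof}
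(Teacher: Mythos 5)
Your proof is correct and follows essentially the same route as the paper: apply the definition \eqref{d-d}, use the defining properties of $Q_0$ and $Q_b$ (valid because $\nabla\cdot\bq\in P_k(T)$ and $\bq\cdot\bn|_e\in P_{k+1}(e)$) to replace projections by $\phi$, and then integrate by parts to identify the result with $(\Q_h\nabla\phi,\bq)_T$. The only differences are cosmetic — you prove \eqref{key1} first and reuse it for \eqref{key}, and you spell out the degree bookkeeping that the paper leaves implicit.
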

\begin{proof}
Using (\ref{d-d}) and  integration by parts, we have that for
any $\bq\in [P_{k+1}(T)]^2$
\begin{eqnarray*}
(\nabla_w Q_h\phi,\bq)_T &=& -(Q_0\phi,\nabla\cdot\bq)_T
+\langle Q_b\phi,\bq\cdot\bn\rangle_{\pT}\\
&=& -(\phi,\nabla\cdot\bq)_T
+\langle \phi,\bq\cdot\bn\rangle_{\pT}\\
&=&(\Q_h\nabla\phi,\bq)_T,
\end{eqnarray*}
and
\begin{eqnarray*}
(\nabla_w \phi,\bq)_T &=& -(\phi,\nabla\cdot\bq)_T
+\langle \phi,\bq\cdot\bn\rangle_{\pT}\\
&=&(\nabla \phi,\bq)_T=(\Q_h\nabla\phi,\bq)_T,
\end{eqnarray*}
which imply the equation (\ref{key}) and the equation (\ref{key1}).
\end{proof}

For any function $\varphi\in H^1(T)$, the following trace
inequality holds true (see \cite{wymix} for details):
\begin{equation}\label{trace}
\|\varphi\|_{e}^2 \leq C \left( h_T^{-1} \|\varphi\|_T^2 + h_T
\|\nabla \varphi\|_{T}^2\right).
\end{equation}

\section{Well Posedness}

For any $v\in V_h+H^1(\Omega)$, define two semi-norms
\begin{eqnarray}
\3bar v\3bar^2&=&(\nabla_wv,\nabla_wv)_{\T_h},\label{3barnorm}\\
\3bar v\3bar^2_1&=&(a\nabla_wv,\nabla_wv)_{\T_h}.\label{3barnorm-1}
\end{eqnarray}
It follows from (\ref{ellipticity}) that there exist two positive constants $\alpha$ and $\beta$ such that
\begin{eqnarray}
\alpha\3bar v\3bar\le \3bar v\3bar_1\le \beta\3bar v\3bar.\label{norm-equ}
\end{eqnarray}

We introduce a discrete $H^1$ semi-norm as follows:
\begin{equation}\label{norm}
\|v\|_{1,h} = \left( \sum_{T\in\T_h}\left(\|\nabla
v_0\|_T^2+h_T^{-1} \|  v_0-v_b\|^2_\pT\right) \right)^{\frac12}.
\end{equation}
It is easy to show that $\|v\|_{1,h}$ defines a norm in $V_h^0$.

Next we will show that $\3bar  \cdot \3bar$ also defines a norm in $V_h^0$ by proving
the equivalence of $\3bar\cdot\3bar$ and $\|\cdot\|_{1,h}$ in $V_h$. First we need the following lemma.

\medskip

The proof of the following lemma is long and  can be found in Appendix.

\begin{lemma}\label{ML}
	For any  $v\in V_{h}$, we have
	\begin{eqnarray}
\sum_{T\in {\cal T}_{h}}h_T^{-1}\|v_0-v_b\|^2_{\partial T}\leq C\3bar v\3bar^2.\label{happy0}
	\end{eqnarray}
\end{lemma}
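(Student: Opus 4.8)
The plan is to prove the estimate element by element, namely to establish
\[
h_T^{-1}\|v_0-v_b\|^2_{\pT}\le C\,\|\nabla_w v\|^2_T\qquad\text{for each }T\in\T_h,
\]
and then sum over $T\in\T_h$ to obtain (\ref{happy0}). Since the partition is shape regular, an affine change of variables to a reference triangle (with the Piola transform applied to the vector fields $\bq$ appearing in the definition (\ref{d-d}) of the weak gradient) shows that each of the quantities $h_T^{-1}\|v_0-v_b\|^2_{\pT}$, $\|\nabla v_0\|_T^2$, and $\|\nabla_w v\|_T^2$ is invariant, up to constants depending only on the shape‑regularity parameter, under this scaling; it therefore suffices to produce the per‑element bound with a uniform constant. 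The starting point is the identity obtained by applying integration by parts to the first term of (\ref{d-d}),
\[
(\nabla_w v,\bq)_T=(\nabla v_0,\bq)_T+\langle v_b-v_0,\bq\cdot\bn\rangle_{\pT},\qquad\forall\,\bq\in[P_{k+1}(T)]^2,
\]
which exposes the boundary jump $v_0-v_b$ through the normal trace of the test field. Note that on each edge $e\subset\pT$ one has $v_0|_e\in P_k(e)\subset P_{k+1}(e)$ and $v_b|_e\in P_{k+1}(e)$, so the jump $v_0-v_b$ lies edgewise in $P_{k+1}(e)$.

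The crux is to construct, for the given $v$, a field $\bq\in[P_{k+1}(T)]^2$ with two simultaneous properties: its normal trace reproduces the jump, $\bq\cdot\bn=v_0-v_b$ on $\pT$, and it is $L^2(T)$‑orthogonal to the fixed subspace $\nabla P_k(T)$, so that $(\nabla v_0,\bq)_T=0$ automatically. I would seek such a $\bq$ as the solution of a local finite‑dimensional linear system, solving the corresponding problem first on the reference triangle and transferring it by the scaling above. The stability I need is the bound $\|\bq\|_T\le C\,h_T^{1/2}\|v_0-v_b\|_{\pT}$, which on the reference element is simply the boundedness of the (finite‑dimensional) solution map and then follows on $T$ from the scaling of the interior and boundary $L^2$ norms.

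Granting such a $\bq$, the two identities combine cleanly, the orthogonality annihilating the volume term:
\[
\|v_0-v_b\|^2_{\pT}=\langle v_0-v_b,\bq\cdot\bn\rangle_{\pT}=(\nabla v_0,\bq)_T-(\nabla_w v,\bq)_T=-(\nabla_w v,\bq)_T.
\]
Cauchy--Schwarz together with the stability of $\bq$ then yields
\[
\|v_0-v_b\|^2_{\pT}\le\|\nabla_w v\|_T\,\|\bq\|_T\le C\,h_T^{1/2}\,\|\nabla_w v\|_T\,\|v_0-v_b\|_{\pT},
\]
whence $h_T^{-1}\|v_0-v_b\|^2_{\pT}\le C\|\nabla_w v\|^2_T$, and summation over $T$ gives the claim.

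The hard part is the construction in the second step: proving that the normal‑trace map $\bq\mapsto\bq\cdot\bn|_{\pT}$, restricted to the subspace of $[P_{k+1}(T)]^2$ that is orthogonal to $\nabla P_k(T)$, is onto the edgewise space $\prod_{e\subset\pT}P_{k+1}(e)$ containing $v_0-v_b$, together with the uniform stability of its right inverse. This is precisely where the particular degree combination $(P_k,P_{k+1},[P_{k+1}]^2)$ enters, and it is a delicate dimension‑counting and rank argument on the reference triangle. The orthogonality constraint is what makes the argument work: without it one only recovers $\|v_0-v_b\|^2_{\pT}\le C\|v_0-v_b\|^2_{\pT}$, a circular estimate carrying no information, so the construction of a constraint‑respecting $\bq$ with controlled norm is essential, and its careful verification is the long computation deferred to the appendix.
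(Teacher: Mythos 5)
Your proposal, as written, reduces Lemma \ref{ML} to a single unproved claim: that for every jump $g=(v_0-v_b)|_{\pT}$ (edgewise in $P_{k+1}(e)$) there exists $\bq\in[P_{k+1}(T)]^2$ with $\bq\cdot\bn=g$ on $\pT$, with $(\bq,\nabla w)_T=0$ for all $w\in P_k(T)$, and with $\|\bq\|_T\le Ch_T^{1/2}\|g\|_{\pT}$. Everything else in the write-up --- the identity obtained from (\ref{d-d}) by integration by parts, the cancellation of the volume term, Cauchy--Schwarz, the scaling, the summation over elements --- is routine bookkeeping; the constrained lifting claim \emph{is} the lemma. You acknowledge this (``the hard part'') and then defer it to ``a delicate dimension-counting and rank argument'' that is never carried out, so what you have is a reformulation of the statement, not a proof. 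Nor is the claim something that can be waved through: it is precisely where the degree combination matters (with a weak gradient computed in $[P_k(T)]^2$, for instance, normal traces are only edgewise $P_k$ and can never reproduce a $P_{k+1}$ jump, so the lifting does not exist), and a raw dimension count of the restricted normal-trace map is inconclusive --- surjectivity has to be verified by a rank argument, which is exactly the missing content.

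For what it is worth, your route can be completed, and the completed proof would be genuinely different from the paper's. By finite-dimensional duality, surjectivity of the constrained lifting is equivalent to the kernel statement that $\nabla_w v|_T=0$ forces $v_0-v_b=0$ on $\pT$; this follows by testing (\ref{d-d}) first with the tangential bubble fields $\bq=\lambda_i\lambda_j\,p\,\bt_\ell$, $p\in P_{k-1}(T)$ (these have vanishing normal trace on all of $\pT$ and force $\nabla v_0\cdot\bt_\ell=0$, hence $\nabla v_0=0$), and then invoking surjectivity of the unconstrained normal-trace map $[P_{k+1}(T)]^2\to\prod_{e\subset\pT}P_{k+1}(e)$ (the classical $\mathrm{BDM}_{k+1}$ unisolvence) to conclude $v_b=v_0=\mathrm{const}$; uniform stability of a right inverse then comes from finite dimensionality together with your scaling step. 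By contrast, the paper never proves a one-element bound at all: its Appendix works on pairs of adjacent triangles, constructs explicit piecewise test fields on $T_1\cup T_2$ (Lemmas \ref{mainlemma} and \ref{lastlemma}), compares positive definite quadratic forms via Lemma \ref{eqn3}, and obtains (\ref{happy0}) by summing over pairs; that argument also exploits the single-valuedness of $v_b$ on the shared edge, which a per-element bound never needs. So your plan, once the lifting lemma is actually proven, would give a shorter and slightly stronger proof --- but as submitted it has a genuine gap at its central step.
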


\begin{lemma} There exist two positive constants $C_1$ and $C_2$ such
that for any $v=\{v_0,v_b\}\in V_h$, we have
\begin{equation}\label{happy}
C_1 \|v\|_{1,h}\le \3bar v\3bar \leq C_2 \|v\|_{1,h}.
\end{equation}
\end{lemma}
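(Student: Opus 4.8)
The plan is to prove both inequalities in (\ref{happy}) from a single algebraic identity derived from the definition (\ref{d-d}) of the weak gradient. Integrating by parts in the first term on the right of (\ref{d-d}), I would rewrite, for every $\bq\in[P_{k+1}(T)]^2$ and every $T\in\T_h$,
\[
(\nabla_w v,\bq)_T=(\nabla v_0,\bq)_T-\langle v_0-v_b,\bq\cdot\bn\rangle_{\pT}.
\]
This identity is the engine for everything. Choosing $\bq=\nabla_w v$ yields the upper bound, while choosing $\bq=\nabla v_0$ yields the lower bound; the latter is a legitimate test function since $v_0\in P_k(T)$ forces $\nabla v_0\in[P_{k-1}(T)]^2\subset[P_{k+1}(T)]^2$.

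For the upper bound $\3bar v\3bar\le C_2\|v\|_{1,h}$, I would set $\bq=\nabla_w v$ in the identity, apply Cauchy--Schwarz to each term, and treat the boundary term by Cauchy--Schwarz on $\pT$ followed by the estimate $\|\nabla_w v\|_{\pT}\le C h_T^{-1/2}\|\nabla_w v\|_T$. This last inequality follows by combining the trace inequality (\ref{trace}) with a standard inverse inequality, both legitimate because $\nabla_w v$ is a polynomial of degree $k+1$ on $T$. Dividing through by $\|\nabla_w v\|_T$ gives $\|\nabla_w v\|_T\le\|\nabla v_0\|_T+Ch_T^{-1/2}\|v_0-v_b\|_{\pT}$; squaring and summing over $T$ produces $\3bar v\3bar^2\le C\|v\|_{1,h}^2$.

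For the lower bound $C_1\|v\|_{1,h}\le\3bar v\3bar$, the jump contribution $\sum_T h_T^{-1}\|v_0-v_b\|_{\pT}^2$ is bounded directly by $\3bar v\3bar^2$ thanks to Lemma \ref{ML}, i.e. (\ref{happy0}). To control $\sum_T\|\nabla v_0\|_T^2$, I would set $\bq=\nabla v_0$ in the identity, so that $\|\nabla v_0\|_T^2=(\nabla_w v,\nabla v_0)_T+\langle v_0-v_b,\nabla v_0\cdot\bn\rangle_{\pT}$, and then estimate the two terms exactly as above (with $\nabla v_0$ replacing $\nabla_w v$ in the trace/inverse step). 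This gives $\|\nabla v_0\|_T\le\|\nabla_w v\|_T+Ch_T^{-1/2}\|v_0-v_b\|_{\pT}$; squaring, summing, and invoking Lemma \ref{ML} once more to absorb the jump term yields $\|v\|_{1,h}^2\le C\3bar v\3bar^2$.

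The only genuinely hard input is Lemma \ref{ML}, whose long proof is deferred to the Appendix; granting it, the remainder is a routine combination of the weak-gradient identity with the trace and inverse inequalities. I expect no obstacle beyond the bookkeeping of the generic constants denoted $C$.
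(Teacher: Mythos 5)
Your proposal is correct and follows essentially the same route as the paper's proof: the same integration-by-parts identity, the choices $\bq=\nabla_w v$ and $\bq=\nabla v_0$ for the upper and lower bounds respectively, the trace/inverse inequality combination, and Lemma \ref{ML} to absorb the jump term. Your explicit remark that $\nabla v_0\in[P_{k-1}(T)]^2\subset[P_{k+1}(T)]^2$ is a valid test function is a detail the paper leaves implicit, but otherwise the two arguments coincide.
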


\medskip

\begin{proof}
For any $v=\{v_0,v_b\}\in V_h$, it follows from the definition of
weak gradient (\ref{d-d}) and integration by parts that on each $T\in\T_h$
\begin{eqnarray}\label{n-1}
(\nabla_wv,\bq)_T=(\nabla v_0,\bq)_T+\l v_b-v_0,
\bq\cdot\bn\r_\pT\quad \forall \bq\in [P_{k+1}(T)]^2.
\end{eqnarray}
By letting $\bq=\nabla_w v|_T$ in (\ref{n-1}) we arrive at
\begin{eqnarray*}
(\nabla_wv,\nabla_w v)_T=(\nabla v_0,\nabla_w v)_T+\l v_b-v_0,
\nabla_w v\cdot\bn\r_\pT.
\end{eqnarray*}
Letting $\bq=\nabla v_0|_T$ in (\ref{n-1}) implies
\begin{eqnarray}
(\nabla_wv,\nabla v_0)_T=(\nabla v_0,\nabla v_0)_T+\l v_b-v_0,
\nabla v_0\cdot\bn\r_\pT.\label{n40}
\end{eqnarray}
From the trace inequality (\ref{trace}) and the inverse inequality
we have
\begin{eqnarray*}
\|\nabla_wv\|^2_T &\le& \|\nabla v_0\|_T \|\nabla_w v\|_T+ \|
v_0-v_b\|_\pT \|\nabla_w v\|_\pT\\
&\le& \|\nabla v_0\|_T \|\nabla_w v\|_T+ Ch_T^{-1/2}\|
v_0-v_b\|_\pT \|\nabla_w v\|_T,
\end{eqnarray*}
which implies
$$
\|\nabla_w v\|_T \le C \left(\|\nabla v_0\|_T +h_T^{-1/2}\|v_0-v_b\|_\pT\right),
$$
and consequently
$$\3bar v\3bar \leq C_2 \|v\|_{1,h}.$$

Next we will prove $C_1 \|v\|_{1,h}\le \3bar v\3bar $.
It follows from (\ref{n40}),  the trace inequality and the inverse inequality,
$$
\|\nabla v_0\|_T^2 \leq \|\nabla_w v\|_T \|\nabla v_0\|_T
+Ch_T^{-1/2}\| v_0-v_b\|_\pT \|\nabla v_0\|_T,
$$
which implies
$$
\sum_{T\in\T_h}\|\nabla v_0\|^2_T \leq C(\sum_{T\in\T_h}h_T^{-1}\| v_0-v_b\|^2_\pT+\sum_{T\in\T_h}\|\nabla_w v\|^2_T).
$$
Combining the above estimate and (\ref{happy0}),
 we prove  the lower bound of (\ref{happy}) and complete the proof of the lemma.
\end{proof}

\medskip

\begin{lemma}
The weak Galerkin finite element scheme (\ref{wg}) has a unique
solution.
\end{lemma}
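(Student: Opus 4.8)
The plan is to exploit the fact that (\ref{wg}) is a square linear system over the finite-dimensional space $V_h^0$, so that existence of a solution follows automatically once uniqueness is established. First I would reduce to the homogeneous problem: suppose $u_h^{(1)}$ and $u_h^{(2)}$ are two solutions of (\ref{wg}) corresponding to the same data $f$ and $g$, and set $w = u_h^{(1)} - u_h^{(2)}$. Since both solutions share the boundary condition $u_b = Q_b g$ on $\partial\Omega$, the boundary part of $w$ vanishes, so $w \in V_h^0$, and by linearity $w$ satisfies $(a\nabla_w w, \nabla_w v)_{\T_h} = 0$ for all $v \in V_h^0$.

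Next I would test this homogeneous equation against $v = w$ itself. This gives $(a\nabla_w w, \nabla_w w)_{\T_h} = \3bar w \3bar_1^2 = 0$. Appealing to the lower bound in the norm equivalence (\ref{norm-equ}), namely $\alpha \3bar w \3bar \le \3bar w \3bar_1$, I would conclude $\3bar w \3bar = 0$.

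Finally, I would invoke the equivalence (\ref{happy}) between $\3bar \cdot \3bar$ and $\|\cdot\|_{1,h}$: from $\3bar w \3bar = 0$ together with $C_1 \|w\|_{1,h} \le \3bar w \3bar$ I obtain $\|w\|_{1,h} = 0$. Because $\|\cdot\|_{1,h}$ is a genuine norm on $V_h^0$, this forces $w = 0$, so the two solutions coincide. Uniqueness for the finite-dimensional square system then delivers existence as well, completing the proof.

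The substance of this argument is not the linear-algebra bookkeeping but the fact that $\3bar \cdot \3bar$ is a true norm — rather than merely a seminorm — on $V_h^0$; this is precisely what (\ref{happy}) guarantees, and that equivalence itself rests on the nontrivial Lemma \ref{ML} established in the Appendix. Thus the genuine difficulty, namely bounding $\sum_{T\in\T_h} h_T^{-1}\|v_0 - v_b\|_{\partial T}^2$ by $\3bar v \3bar^2$, has already been absorbed into Lemma \ref{ML}, and so the uniqueness proof reduces to the short testing-and-equivalence chain described above.
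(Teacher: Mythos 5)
Your proposal is correct and follows essentially the same route as the paper: form the difference of two solutions, test the homogeneous equation with the difference itself, use the ellipticity bound (\ref{norm-equ}) to get $\3bar w\3bar=0$, then apply the equivalence (\ref{happy}) and the fact that $\|\cdot\|_{1,h}$ is a norm on $V_h^0$ to conclude $w=0$. Your additional remark that existence follows from uniqueness for a square finite-dimensional linear system is left implicit in the paper but is the standard completion of the argument.
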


\smallskip

\begin{proof}
If $u_h^{(1)}$ and $u_h^{(2)}$ are two solutions of (\ref{wg}), then
$\varepsilon_h=u_h^{(1)}-u_h^{(2)}\in V_h^0$ would satisfy the following equation
$$
(a\nabla_w \varepsilon_h,\nabla_w v)_{\T_h}=0\qquad\forall v\in V_h^0.
$$
 Then by letting $v=\varepsilon_h$ in the above
equation and (\ref{norm-equ}), we arrive at
$$
\3bar \varepsilon_h\3bar^2 \le (a\nabla_w \varepsilon_h,\nabla_w \varepsilon_h)=0.
$$
It follows from (\ref{happy}) that $\|\varepsilon_h\|_{1,h}=0$. Since $\|\cdot\|_{1,h}$ is a norm in $V_h^0$, one has $\varepsilon_h=0$.
 This completes the proof of the lemma.
\end{proof}

\section{Error Estimates in Energy Norm}
The goal of this section is to establish some error estimates for
the weak Galerkin finite element solution $u_h$ arising from (\ref{wg}).
For simplicity of analysis, we assume that the coefficient tensor
$a$ in (\ref{pde}) is a piecewise constant matrix with respect to
the finite element partition $\T_h$. The result can be extended to
variable tensors without any difficulty, provided that the tensor
$a$ is piecewise sufficiently smooth.

Let $e_h=Q_hu-u_h$. Next we derive an error equation that $e_h$ satisfies.
Define a bilinear form  $\ell(u,v)$ by
\begin{eqnarray*}
\ell(u,v)&=& \langle a(\nabla u-\Q_h\nabla u)\cdot\bn,\;v_0-v_b\rangle_{\partial\T_h}.
\end{eqnarray*}

\begin{lemma}
For any $v\in V_h^0$, the error $e_h$ satisfies the following  equation
\begin{eqnarray}
(a\nabla_we_h,\nabla_wv)_{\T_h}=\ell(u,v).\label{ee}
\end{eqnarray}
\end{lemma}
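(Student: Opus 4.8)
The plan is to derive the error equation by testing the weak formulation against a test function and comparing it against the analogous identity satisfied by the $L^2$ projection $Q_h u$. The starting point is to establish that $Q_h u$ solves a perturbed version of the discrete problem, where the perturbation is exactly the bilinear form $\ell(u,v)$.

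First I would take any $v=\{v_0,v_b\}\in V_h^0$ and compute $(a\nabla_w(Q_hu),\nabla_w v)_{\T_h}$. Using the key identity $(\ref{key})$ from the first lemma, namely $\nabla_w(Q_h u)=\Q_h\nabla u$, this term becomes $(a\Q_h\nabla u,\nabla_w v)_{\T_h}$. Since $a$ is piecewise constant, $a\Q_h\nabla u=\Q_h(a\nabla u)$ up to the projection acting componentwise, so I can freely insert or remove $\Q_h$ against $\nabla_w v\in[P_{k+1}(T)]^2$, because $\Q_h$ is the $L^2$ projection onto exactly that space. This lets me write $(a\Q_h\nabla u,\nabla_w v)_{\T_h}=(a\nabla u,\nabla_w v)_{\T_h}$, replacing the projected gradient by the true gradient inside the inner product.

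Next I would apply the definition of the weak gradient $(\ref{d-d})$ with $\bq=a\nabla u$ on each element (valid since $a\nabla u$ restricted to $T$ is a polynomial of the right degree when $u$ is, or handled by the projection argument above), together with integration by parts on $\sum_T(a\nabla u,\nabla v_0)_T$. The standard weak Galerkin manipulation yields
\begin{equation*}
(a\nabla u,\nabla_w v)_{\T_h}=(f,v_0)+\langle a(\nabla u-\Q_h\nabla u)\cdot\bn,\;v_0-v_b\rangle_{\partial\T_h},
\end{equation*}
where I use $-\nabla\cdot(a\nabla u)=f$ from $(\ref{pde})$ and the fact that the single-valued $v_b$ together with the continuity of $a\nabla u\cdot\bn$ across interior edges makes the boundary terms involving the exact flux telescope and vanish (the boundary contribution on $\partial\Omega$ drops since $v_b=0$ there). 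The residual edge term is precisely $\ell(u,v)$. Subtracting the original scheme $(\ref{wg})$, $(a\nabla_w u_h,\nabla_w v)_{\T_h}=(f,v_0)$, from this identity and using bilinearity with $e_h=Q_hu-u_h$ gives $(\ref{ee})$.

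The main obstacle I anticipate is the bookkeeping in the integration-by-parts step that produces the edge residual: one must carefully track how the projection $\Q_h$ interacts with the normal trace $a\nabla u\cdot\bn$ on $\partial T$, and verify that the ``exact'' flux terms $\langle a\nabla u\cdot\bn,v_b\rangle$ cancel across shared interior edges because $v_b$ is single-valued while the outward normals of adjacent elements are opposite. The subtlety is that it is only the \emph{projected} flux $\Q_h\nabla u\cdot\bn$ that fails to be continuous across edges, so the surviving term is the difference $a(\nabla u-\Q_h\nabla u)\cdot\bn$ paired against $v_0-v_b$; assembling this difference correctly, rather than two separate uncancelled pieces, is where the care is needed.
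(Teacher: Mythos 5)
Your proposal is correct and takes essentially the same route as the paper's own proof: it uses the commuting identity (\ref{key}), the $L^2$-projection property of $\Q_h$ combined with the piecewise-constant coefficient $a$ to swap $\Q_h\nabla u$ and $\nabla u$ inside inner products with polynomials, the definition (\ref{d-d}) plus integration by parts, the flux-continuity cancellation $\langle a\nabla u\cdot\bn, v_b\rangle_{\partial\T_h}=0$, and finally subtraction of the scheme (\ref{wg}); your displayed identity is exactly the paper's equation (\ref{j2}) after the substitution $(a\nabla_w Q_hu,\nabla_w v)_{\T_h}=(a\nabla u,\nabla_w v)_{\T_h}$. The one inaccuracy is the parenthetical claim that $a\nabla u|_T$ is a polynomial of the right degree (it is not, since $u$ is the exact solution), but your stated fallback --- applying (\ref{d-d}) with $\bq=a\Q_h\nabla u\in[P_{k+1}(T)]^2$ via the projection argument --- is precisely what the paper does, so the proof goes through as written.
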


\begin{proof}
For $v=\{v_0,v_b\}\in V_h^0$, testing (\ref{pde}) by  $v_0$  gives
\begin{equation}\label{m1}
(a\nabla u,\nabla v_0)_{\T_h}- \langle
a\nabla u\cdot\bn,v_0-v_b\rangle_{\partial\T_h}=(f,v_0).
\end{equation}
To obtain the above estimate, we use the fact that
$\langle a\nabla u\cdot\bn, v_b\rangle_{\partial\T_h}=0$.

It follows from integration by parts, (\ref{d-d}) and (\ref{key})  that
\begin{eqnarray}
(a\nabla u,\nabla v_0)_{\T_h}&=&(a\Q_h\nabla  u,\nabla v_0)_{\T_h}\nonumber\\
&=&-(v_0,\nabla\cdot (a\Q_h\nabla u))_{\T_h}+\langle v_0, a\Q_h\nabla u\cdot\bn\rangle_{\partial\T_h}\nonumber\\
&=&(a\Q_h\nabla u, \nabla_w v)_{\T_h}+\langle v_0-v_b,a\Q_h\nabla u\cdot\bn\rangle_{\partial\T_h}\nonumber\\
&=&(a \nabla_w Q_hu, \nabla_w v)_{\T_h}+\langle v_0-v_b,a\Q_h\nabla u\cdot\bn\rangle_{\partial\T_h}.\label{j1}
\end{eqnarray}
Combining (\ref{m1}) and (\ref{j1}) gives
\begin{eqnarray}
(a\nabla_w Q_hu,\nabla_w v)_{\T_h}&=&(f,v_0)+\ell(u,v).\label{j2}
\end{eqnarray}
The error equation follows from subtracting (\ref{wg}) from (\ref{j2}),
\begin{eqnarray*}
(a\nabla_we_h,\nabla_wv)_{\T_h}=\ell(u,v)\quad \forall v\in V_h^0.
\end{eqnarray*}
This completes the proof of the lemma.
\end{proof}

\begin{theorem} Let $u_h\in V_h$ be the weak Galerkin finite element solution of (\ref{wg}). Assume the exact solution $u\in H^{k+3}(\Omega)$. Then,
there exists a constant $C$ such that
\begin{equation}\label{err1}
\3bar Q_hu-u_h\3bar \le Ch^{k+2}|u|_{k+3}.
\end{equation}
\end{theorem}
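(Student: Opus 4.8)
The plan is to test the error equation (\ref{ee}) against $v=e_h$ itself and then to convert the right-hand functional $\ell(u,e_h)$ into a product of a known approximation quantity and a multiple of $\3bar e_h\3bar$. First I would verify that $e_h=Q_hu-u_h$ actually lies in $V_h^0$: on $\partial\Omega$ the exact solution satisfies $u=g$, so $Q_bu=Q_bg=u_b$ there, whence the boundary component of $e_h$ vanishes on $\partial\Omega$. This legitimizes choosing $v=e_h$ in (\ref{ee}), which yields
\[
\3bar e_h\3bar_1^2=(a\nabla_we_h,\nabla_we_h)_{\T_h}=\ell(u,e_h)=\langle a(\nabla u-\Q_h\nabla u)\cdot\bn,\;e_0-e_b\rangle_{\partial\T_h}.
\]

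Next I would estimate $\ell(u,e_h)$ by the Cauchy--Schwarz inequality on $\partial\T_h$, inserting the weights $h_T^{1/2}$ and $h_T^{-1/2}$:
\[
|\ell(u,e_h)|\le\Big(\sum_{T\in\T_h}h_T\|a(\nabla u-\Q_h\nabla u)\cdot\bn\|_{\partial T}^2\Big)^{1/2}\Big(\sum_{T\in\T_h}h_T^{-1}\|e_0-e_b\|_{\partial T}^2\Big)^{1/2}.
\]
The second factor is controlled directly by Lemma \ref{ML}, i.e.\ (\ref{happy0}), giving a bound of $C\3bar e_h\3bar$; this is precisely where the stabilizer-free structure enters, since without (\ref{happy0}) the boundary jump could not be absorbed into the energy seminorm. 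For the first factor I would apply the trace inequality (\ref{trace}) to $\varphi=\nabla u-\Q_h\nabla u$ on each $T$ together with the standard $L^2$-projection approximation estimates for $\Q_h$ onto $[P_{k+1}(T)]^2$. Since $u\in H^{k+3}$ gives $\nabla u\in[H^{k+2}(T)]^2$, these yield $\|\nabla u-\Q_h\nabla u\|_T\le Ch_T^{k+2}|u|_{k+3,T}$ and $\|\nabla(\nabla u-\Q_h\nabla u)\|_T\le Ch_T^{k+1}|u|_{k+3,T}$, so (\ref{trace}) produces $\|\nabla u-\Q_h\nabla u\|_{\partial T}^2\le Ch_T^{2k+3}|u|_{k+3,T}^2$ and hence, using the boundedness of $a$ from (\ref{ellipticity}),
\[
\sum_{T\in\T_h}h_T\|a(\nabla u-\Q_h\nabla u)\cdot\bn\|_{\partial T}^2\le Ch^{2k+4}|u|_{k+3}^2.
\]

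Combining the two factors gives $|\ell(u,e_h)|\le Ch^{k+2}|u|_{k+3}\,\3bar e_h\3bar$. Finally I would invoke the norm equivalence (\ref{norm-equ}), $\alpha\3bar e_h\3bar\le\3bar e_h\3bar_1$, to write $\alpha^2\3bar e_h\3bar^2\le\3bar e_h\3bar_1^2=\ell(u,e_h)\le Ch^{k+2}|u|_{k+3}\,\3bar e_h\3bar$, and cancel one power of $\3bar e_h\3bar$ to reach (\ref{err1}).

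I expect the delicate step to be the bound on the first factor: the full gain of two orders (to $h^{k+2}$ rather than the generic $h^k$) rests on $\Q_h$ mapping into degree $k+1$, one higher than the degree $k$ of $v_0$, so that the approximation error of $\nabla u-\Q_h\nabla u$ is one order better than usual, and on the careful bookkeeping of powers of $h_T$ in the trace inequality (the $h_T^{-1}\|\cdot\|_T^2$ and $h_T\|\nabla\cdot\|_T^2$ terms must each collapse to $h_T^{2k+3}$ before the outer factor $h_T$ is restored). Making these powers match, and confirming that Lemma \ref{ML} indeed supplies the clean jump bound $C\3bar e_h\3bar$ in the stabilizer-free setting, is the crux of the argument.
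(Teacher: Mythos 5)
Your proposal is correct and follows essentially the same route as the paper: test the error equation (\ref{ee}) with $v=e_h$, split $\ell(u,e_h)$ by Cauchy--Schwarz with $h_T^{\pm 1/2}$ weights, bound the first factor by the trace inequality (\ref{trace}) together with the approximation property of $\Q_h$ onto $[P_{k+1}(T)]^2$ (yielding $Ch^{k+2}|u|_{k+3}$), and absorb the jump factor into $C\3bar e_h\3bar$. The only cosmetic difference is that you invoke (\ref{happy0}) directly for the jump term while the paper cites the norm equivalence (\ref{happy}), which is itself derived from (\ref{happy0}), so the two are interchangeable; your explicit check that $e_h\in V_h^0$ is a detail the paper leaves implicit.
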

\begin{proof}
By letting $v=e_h$ in (\ref{ee}) and using (\ref{norm-equ}), we have
\begin{eqnarray}
\3bar e_h\3bar^2&\le&(a\nabla_we_h, \nabla_we_h)_{\T_h}=|\ell(u,e_h)|.\label{eee1}
\end{eqnarray}

Using the Cauchy-Schwarz inequality, the trace inequality (\ref{trace}), (\ref{ellipticity}) and (\ref{happy}), we have
\begin{eqnarray}
|\ell(u,e_h)|&=&\left|\sum_{T\in\T_h}\langle a(\nabla u-\Q_h\nabla
u)\cdot\bn, e_0-e_b\rangle_\pT\right|\nonumber\\
&\le & C \sum_{T\in\T_h}\|\nabla u-\Q_h\nabla u\|_{\pT}
\|e_0-e_b\|_\pT\nonumber\\
&\le & C \left(\sum_{T\in\T_h}h_T\|(\nabla u-\Q_h\nabla u)\|_{\pT}^2\right)^{\frac12}
\left(\sum_{T\in\T_h}h_T^{-1}\|e_0-e_b\|_\pT^2\right)^{\frac12}\nonumber\\
&\le & Ch^{k+2}|u|_{k+3}\3bar e_h\3bar.\label{mmm1}
\end{eqnarray}
It  follows from (\ref{eee1}) and (\ref{mmm1}) that
\[
\3bar e_h\3bar^2 \le Ch^{k+2}|u|_{k+3}\3bar e_h\3bar,
\]
which implies (\ref{err1}). This completes the proof.
\end{proof}

\section{Error Estimates in $L^2$ Norm}

The standard duality argument is used to obtain $L^2$ error estimate.
Recall $e_h=\{e_0,e_b\}=Q_hu-u_h=\{Q_0u-u_0, Q_bu-u_b\}$.
The considered dual problem seeks $\Phi\in H_0^1(\Omega)$ satisfying
\begin{eqnarray}
-\nabla\cdot a\nabla\Phi&=& e_0\quad
\mbox{in}\;\Omega.\label{dual}
\end{eqnarray}
Assume that the following $H^{2}$-regularity holds
\begin{equation}\label{reg}
\|\Phi\|_2\le C\|e_0\|.
\end{equation}

\begin{theorem} Let $u_h\in V_h$ be the weak Galerkin finite element solution of (\ref{wg}). Assume that the
exact solution $u\in H^{k+3}(\Omega)$ and (\ref{reg}) holds true.
 Then, there exists a constant $C$ such that
\begin{equation}\label{err2}
\|Q_0u-u_0\| \le Ch^{k+3}|u|_{k+3}.
\end{equation}
\end{theorem}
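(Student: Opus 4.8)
The plan is to run the standard duality argument on the dual problem (\ref{dual}), under the regularity (\ref{reg}). First I would test (\ref{dual}) against $e_0$. Since $a\nabla\Phi$ is continuous across interior edges and $e_b=0$ on $\partial\Omega$ (so that $e_h\in V_h^0$, because $e_b=Q_bu-u_b=Q_bg-Q_bg=0$ there), elementwise integration by parts gives
$$\|e_0\|^2=(a\nabla\Phi,\nabla e_0)_{\T_h}-\langle a\nabla\Phi\cdot\bn,\,e_0-e_b\rangle_{\partial\T_h},$$
which is exactly the analogue of (\ref{m1}) with $(u,f,v)$ replaced by $(\Phi,e_0,e_h)$. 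Repeating verbatim the computation (\ref{j1})–(\ref{j2}) with these substitutions rewrites $(a\nabla\Phi,\nabla e_0)_{\T_h}$ through $(a\nabla_wQ_h\Phi,\nabla_we_h)_{\T_h}$ plus a boundary remainder, yielding
$$\|e_0\|^2=(a\nabla_wQ_h\Phi,\nabla_we_h)_{\T_h}-\ell(\Phi,e_h).$$

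Because $\Phi\in H_0^1(\Omega)$ we have $Q_h\Phi\in V_h^0$, so the symmetry of $a$ together with the error equation (\ref{ee}) tested at $v=Q_h\Phi$ converts the first term into $\ell(u,Q_h\Phi)$, giving the clean identity $\|e_0\|^2=\ell(u,Q_h\Phi)-\ell(\Phi,e_h)$. The remaining task is to bound each $\ell$-term by $Ch^{k+3}|u|_{k+3}\|e_0\|$. For $\ell(\Phi,e_h)$ I would argue exactly as in (\ref{mmm1}): Cauchy–Schwarz and the trace inequality (\ref{trace}) produce the factor $\big(\sum_T h_T\|\nabla\Phi-\Q_h\nabla\Phi\|_\pT^2\big)^{1/2}\le Ch|\Phi|_2\le Ch\|e_0\|$ (only first order, since $\Phi\in H^2$), while Lemma \ref{ML} controls $\big(\sum_T h_T^{-1}\|e_0-e_b\|_\pT^2\big)^{1/2}\le C\3bar e_h\3bar$; invoking the energy estimate (\ref{err1}) then gives $|\ell(\Phi,e_h)|\le Ch^{k+3}|u|_{k+3}\|e_0\|$.

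The main obstacle is the term $\ell(u,Q_h\Phi)=\langle a(\nabla u-\Q_h\nabla u)\cdot\bn,\,Q_0\Phi-Q_b\Phi\rangle_{\partial\T_h}$, where a naive bound on $\|Q_0\Phi-Q_b\Phi\|_\pT$ would cost a power of $h$. The key observation I would exploit is that $Q_0\Phi|_e\in P_k(e)\subset P_{k+1}(e)$, so $Q_b\Phi$, being the $L^2(e)$-best approximation in $P_{k+1}(e)$, satisfies $\|\Phi-Q_b\Phi\|_e\le\|\Phi-Q_0\Phi\|_e$; hence $\|Q_0\Phi-Q_b\Phi\|_\pT\le 2\|\Phi-Q_0\Phi\|_\pT$. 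The trace inequality (\ref{trace}) and the approximation properties of $Q_0$ then yield $\|\Phi-Q_0\Phi\|_\pT\le Ch_T^{3/2}|\Phi|_{2,T}$, so that $\big(\sum_T h_T^{-1}\|Q_0\Phi-Q_b\Phi\|_\pT^2\big)^{1/2}\le Ch|\Phi|_2\le Ch\|e_0\|$. Pairing this against $\big(\sum_T h_T\|\nabla u-\Q_h\nabla u\|_\pT^2\big)^{1/2}\le Ch^{k+2}|u|_{k+3}$ via Cauchy–Schwarz gives $|\ell(u,Q_h\Phi)|\le Ch^{k+3}|u|_{k+3}\|e_0\|$. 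Substituting both estimates into the identity above gives $\|e_0\|^2\le Ch^{k+3}|u|_{k+3}\|e_0\|$, which is (\ref{err2}) after dividing by $\|e_0\|$.
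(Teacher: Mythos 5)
Your proposal is correct and follows essentially the same duality argument as the paper: the same identity $\|e_0\|^2=\ell(u,Q_h\Phi)\pm\ell(\Phi,e_h)$ obtained from (\ref{jw.08}), (\ref{j1}) and the error equation (\ref{ee}), the same use of the $L^2(e)$ best-approximation property of $Q_b$ (via $Q_0\Phi|_e\in P_{k+1}(e)$) to get the extra power of $h$ in $\ell(u,Q_h\Phi)$, and the same combination of the trace inequality, Lemma \ref{ML} and the energy estimate (\ref{err1}) for $\ell(\Phi,e_h)$. The only differences are cosmetic: your sign in front of $\ell(\Phi,e_h)$ (which is actually the consistent one, the paper's being a harmless slip, immaterial since only absolute values are used) and your weighted rather than unweighted Cauchy--Schwarz splitting of the boundary sums.
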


\begin{proof}
By testing (\ref{dual}) with $e_0$ we obtain
\begin{eqnarray}\nonumber
\|e_0\|^2&=&-(\nabla\cdot (a\nabla\Phi),e_0)\\
&=&(a\nabla \Phi,\ \nabla e_0)_{\T_h}-\l
a\nabla\Phi\cdot\bn,\ e_0- e_b\r_{\partial\T_h},\label{jw.08}
\end{eqnarray}
where we have used the fact that $e_b=0$ on $\partial\Omega$.
Setting $\phi=\Phi$ and $v=e_h$ in (\ref{j1}) yields
\begin{eqnarray}
(a\nabla\Phi,\;\nabla e_0)_{\T_h}&=&(a\nabla_w Q_h\Phi,\;\nabla_w e_h)_{\T_h}+\l (a\bbQ_h\nabla\Phi)\cdot\bn,\ e_0-e_b\r_{\partial\T_h}.\label{j1-new}
\end{eqnarray}
Substituting (\ref{j1-new}) into (\ref{jw.08}) gives
\begin{eqnarray}
\|e_0\|^2&=&(a\nabla_w e_h,\ \nabla_w Q_h\Phi)_{\T_h}+
\l a(\bbQ_h\nabla\Phi-\nabla\Phi)\cdot\bn,\ e_0-e_b\r_{\partial\T_h}\nonumber\\
&=&(a\nabla_w e_h,\ \nabla_w Q_h\Phi)_{\T_h}+\ell(\Phi,e_h)\nonumber\\
&=&\ell(u,Q_h\Phi)+\ell(\Phi,e_h).\label{m2}
\end{eqnarray}

Using the Cauchy-Schwarz  inequality and (\ref{trace}), we obtain
\begin{eqnarray}
|\ell(u,Q_h\Phi)|&=&\left|\sum_{T\in\T_h} \langle a(\nabla u-\bbQ_h\nabla
u)\cdot\bn,\; Q_0\Phi-Q_b\Phi\rangle_\pT \right|\nonumber\\
&\le&C\sum_{T\in\T_h} \|\nabla u-\bbQ_h\nabla u\|_{\pT}\| Q_0\Phi-Q_b\Phi\|_\pT \nonumber\\
&\le& C\left(\sum_{T\in\T_h}\|\nabla u-\bbQ_h\nabla u\|^2_\pT\right)^{1/2}\left(\sum_{T\in\T_h}\|Q_0\Phi-Q_b\Phi\|^2_\pT\right)^{1/2}\label{1st-term}
\end{eqnarray}
From the trace inequality (\ref{trace}) and the definition of $Q_b$, we have
\begin{eqnarray*}
\left(\sum_{T\in\T_h}\|Q_0\Phi-Q_b\Phi\|^2_\pT\right)^{1/2} &\le&\left(\sum_{T\in\T_h}\|Q_0\Phi-\Phi\|^2_\pT+\|\Phi-Q_b\Phi\|^2_\pT\right)^{1/2}\\
&\le& C \left(\sum_{T\in\T_h}\|Q_0\Phi-\Phi\|^2_\pT\right)^{1/2}\le Ch^{\frac32}\|\Phi\|_2
\end{eqnarray*}
and
$$
\left(\sum_{T\in\T_h}\|a(\nabla u-\bbQ_h\nabla
u)\|^2_\pT\right)^{1/2} \leq Ch^{k+\frac32}\|u\|_{k+3}.
$$
Combining  the above two estimates with (\ref{1st-term}) gives
\begin{eqnarray}\label{1st-term-complete}
|\ell(u,Q_h\Phi)| \leq C h^{k+3} |u|_{k+3}
\|\Phi\|_2.
\end{eqnarray}

Using the Cauchy-Schwarz inequality, the trace inequality (\ref{trace}), (\ref{ellipticity}), (\ref{happy}) and (\ref{err1}), we have
\begin{eqnarray}
|\ell(\Phi,e_h)|&=&\left|\sum_{T\in\T_h}\langle a(\nabla \Phi-\Q_h\nabla
\Phi)\cdot\bn, e_0-e_b\rangle_\pT\right|\nonumber\\
&\le & C \left(\sum_{T\in\T_h}h_T\|(\nabla \Phi-\Q_h\nabla \Phi)\|_{\pT}^2\right)^{\frac12}
\left(\sum_{T\in\T_h}h_T^{-1}\|e_0-e_b\|_\pT^2\right)^{\frac12}\nonumber\\
&\le & Ch\|\Phi\|_{2}\3bar e_h\3bar\nonumber\\
&\le& Ch^{k+3}|u|_{k+3}\|\Phi\|_2.\label{t30}
\end{eqnarray}

Substituting  (\ref{1st-term-complete}) and (\ref{t30})  into (\ref{m2})  yields
$$
\|e_0\|^2 \leq C h^{k+3}|u|_{k+3} \|\Phi\|_2,
$$
which, combined with the regularity assumption (\ref{reg}), gives the error estimate (\ref{err2}).
\end{proof}

\section{Numerical Experiments}\label{Section:numerical-experiments}

\subsection{Example 1}
Consider problem (\ref{pde}) with $\Omega=(0,1)^2$ and $a=\p{1&0\\0&1} $.
The source term $f$ and the boundary value $g$ are chosen so that the exact solution is
  (non-symmetric, nonzero boundary value)
\begin{equation*}
    u(x,y)=\sin(  x)\sin(\pi y).
\end{equation*}
In this example, we use uniform grids shown in Figure \ref{grid1}.
In Table \ref{t1}, we list the errors and the orders of convergence.
We can see that we do have two orders of superconvergence in both norms.

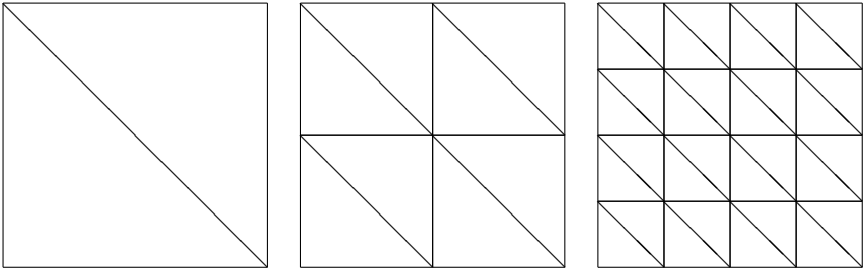
\begin{figure}[h!]
 \begin{center} \setlength\unitlength{1.25pt}
\begin{picture}(260,80)(0,0)
  \def\tr{\begin{picture}(20,20)(0,0)\put(0,0){\line(1,0){20}}\put(0,20){\line(1,0){20}}
          \put(0,0){\line(0,1){20}} \put(20,0){\line(0,1){20}}
   \put(0,20){\line(1,-1){20}}   \end{picture}}
 {\setlength\unitlength{5pt}
 \multiput(0,0)(20,0){1}{\multiput(0,0)(0,20){1}{\tr}}}

  {\setlength\unitlength{2.5pt}
 \multiput(45,0)(20,0){2}{\multiput(0,0)(0,20){2}{\tr}}}

  \multiput(180,0)(20,0){4}{\multiput(0,0)(0,20){4}{\tr}}

 \end{picture}\end{center}
\caption{The first three levels of triangular grids for Examples 1 and 2.}
\label{grid1}
\end{figure}

\begin{table}[h!]
  \centering \renewcommand{\arraystretch}{1.1}
  \caption{Example 1:  Error profiles and convergence rates on grids shown in Figure \ref{grid1}. }
\label{t1}
\begin{tabular}{c|cc|cc}
\hline
level & $\|u_h- Q_h u\|_0 $  &rate &  $\3bar u_h- Q_h u\3bar $ &rate    \\
\hline
 &\multicolumn{4}{c}{by the $P_1$-$P_2$ WG finite element } \\ \hline
 5&   0.5867E-06 & 3.99&   0.6420E-04 & 2.99  \\
 6&   0.3677E-07 & 4.00&   0.8043E-05 & 3.00 \\
 7&   0.2310E-08 & 3.99&   0.1021E-05 & 2.98 \\
\hline
 &\multicolumn{4}{c}{by the $P_2$-$P_3$ WG finite element } \\ \hline
 3&   0.6072E-05 & 4.90&   0.2699E-03 & 3.95 \\
 4&   0.1938E-06 & 4.97&   0.1704E-04 & 3.99\\
 5&   0.6113E-08 & 4.99&   0.1071E-05 & 3.99\\
\hline
 &\multicolumn{4}{c}{by the $P_3$-$P_4$ WG finite element } \\ \hline
 2&   0.1513E-04 & 5.50&   0.4704E-03 & 4.45\\
 3&   0.2397E-06 & 5.98&   0.1519E-04 & 4.95\\
 4&   0.3750E-08 & 6.00&   0.4819E-06 & 4.98\\
 \hline
\end{tabular}%
\end{table}%

\subsection{Example 2}
We solve problem (\ref{pde}) where $\Omega=(0,1)^2$ and
\a{ a=\p{2&1\\1&3}. }
The source term $f$ and the boundary value $g$ are chosen so that the exact solution is
\begin{equation*}
    u(x,y)=x^5y^2.
\end{equation*}
We use same meshes as Example 1.  The result is listed in Table \ref{t2}.
The superconvergence phenomena are same as those in Example 1, i.e., 2 orders of
   superconvergence in both $L^2$ norm and three-bar norm.

\begin{table}[h!]
  \centering \renewcommand{\arraystretch}{1.1}
  \caption{Example 2:  Error profiles and convergence rates on grids shown in Figure \ref{grid1}. }
\label{t2}
\begin{tabular}{c|cc|cc}
\hline
level & $\|u_h- Q_h u\|_0 $  &rate &  $\3bar u_h- Q_h u\3bar $ &rate    \\
\hline
 &\multicolumn{4}{c}{by the $P_1$-$P_2$ WG finite element } \\ \hline
 5&   0.5487E-06 & 3.95&   0.1023E-03 & 2.97\\
 6&   0.3471E-07 & 3.98&   0.1288E-04 & 2.99\\
 7&   0.2182E-08 & 3.99&   0.1621E-05 & 2.99\\
\hline
 &\multicolumn{4}{c}{by the $P_2$-$P_3$ WG finite element } \\ \hline
 3&   0.4230E-05 & 4.69&   0.3492E-03 & 3.84 \\
 4&   0.1440E-06 & 4.88&   0.2261E-04 & 3.95 \\
 5&   0.4680E-08 & 4.94&   0.1432E-05 & 3.98 \\
\hline
 &\multicolumn{4}{c}{by the $P_3$-$P_4$ WG finite element } \\ \hline
 2&   0.8490E-05 & 5.34&   0.5269E-03 & 4.62 \\
 3&   0.1436E-06 & 5.89&   0.1738E-04 & 4.92 \\
 4&   0.2296E-08 & 5.97&   0.5534E-06 & 4.97 \\
 \hline
\end{tabular}%
\end{table}%

\subsection{Example 3}
Consider problem (\ref{pde}) with $\Omega=(0,1)^2$ and $a=\p{1&0\\0&1} $.
The source term $f$ and the boundary value $g$ are chosen so that the exact solution is
\begin{equation*}
    u(x,y)=e^{\pi x} \sin(\pi y).
\end{equation*}
In this example, we use nonuniform grids shown in Figure \ref{grid2}.
In Table \ref{t3}, we list the errors and the orders of convergence.
We can see that we do have two orders of superconvergence in both norms.

\begin{figure}[htb]\begin{center}
\includegraphics[width=3.5in]{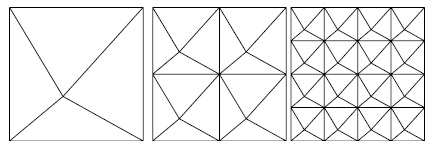} \

\caption{ The first three grids for the computation in Table \ref{t3}.  }
\label{grid2}
\end{center}
\end{figure}

\begin{table}[h!]
  \centering \renewcommand{\arraystretch}{1.1}
  \caption{Example 3:  Error profiles and convergence rates on grids shown in Figure \ref{grid2}. }
\label{t3}
\begin{tabular}{c|cc|cc}
\hline
level & $\|u_h- Q_h u\|_0 $  &rate &  $\3bar u_h- Q_h u\3bar $ &rate    \\
\hline
 &\multicolumn{4}{c}{by the $P_1$-$P_2$ WG finite element } \\ \hline
 5&   0.4530E-05 & 4.00&   0.1130E-02 & 3.00 \\
 6&   0.2830E-06 & 4.00&   0.1411E-03 & 3.00 \\
 7&   0.1768E-07 & 4.00&   0.1762E-04 & 3.00 \\
\hline
 &\multicolumn{4}{c}{by the $P_2$-$P_3$ WG finite element } \\ \hline
 5&   0.4027E-07 & 5.00&   0.1389E-04 & 4.00 \\
 6&   0.1256E-08 & 5.00&   0.8671E-06 & 4.00 \\
 7&   0.3920E-10 & 5.00&   0.5415E-07 & 4.00 \\
\hline
 &\multicolumn{4}{c}{by the $P_3$-$P_4$ WG finite element } \\ \hline
 4&   0.2138E-07 & 6.01&   0.4534E-05 & 5.01 \\
 5&   0.3308E-09 & 6.01&   0.1407E-06 & 5.01 \\
 6&   0.4979E-11 & 6.05&   0.4378E-08 & 5.01 \\
 \hline
\end{tabular}%
\end{table}%

\subsection{Example 4}
Consider problem (\ref{pde}) with $\Omega=(0,1)^3$ and $a=I_{3\times 3}$.
The source term $f$ and the boundary value $g$ are chosen so that the exact solution is
\begin{equation*}
    u(x,y,z)=\sin(\pi x)\sin(\pi y)\sin(\pi z).
\end{equation*}
We use tetrahedral meshes shown in Figure \ref{grid3}.
The results of the $P_k$-$P_{k+1}$ weak Galerkin finite element methods
    are listed in Table \ref{t4}.
The superconvergence phenomena are same as those in 2D, in first three
  examples,   two orders of superconvergence.

\begin{figure}[h!]
\begin{center}
 \setlength\unitlength{1pt}
    \begin{picture}(320,118)(0,3)
    \put(0,0){\begin{picture}(110,110)(0,0)
       \multiput(0,0)(80,0){2}{\line(0,1){80}}  \multiput(0,0)(0,80){2}{\line(1,0){80}}
       \multiput(0,80)(80,0){2}{\line(1,1){20}} \multiput(0,80)(20,20){2}{\line(1,0){80}}
       \multiput(80,0)(0,80){2}{\line(1,1){20}}  \multiput(80,0)(20,20){2}{\line(0,1){80}}
    \put(80,0){\line(-1,1){80}}\put(80,0){\line(1,5){20}}\put(80,80){\line(-3,1){60}}
      \end{picture}}
    \put(110,0){\begin{picture}(110,110)(0,0)
       \multiput(0,0)(40,0){3}{\line(0,1){80}}  \multiput(0,0)(0,40){3}{\line(1,0){80}}
       \multiput(0,80)(40,0){3}{\line(1,1){20}} \multiput(0,80)(10,10){3}{\line(1,0){80}}
       \multiput(80,0)(0,40){3}{\line(1,1){20}}  \multiput(80,0)(10,10){3}{\line(0,1){80}}
    \put(80,0){\line(-1,1){80}}\put(80,0){\line(1,5){20}}\put(80,80){\line(-3,1){60}}
       \multiput(40,0)(40,40){2}{\line(-1,1){40}}
        \multiput(80,40)(10,-30){2}{\line(1,5){10}}
        \multiput(40,80)(50,10){2}{\line(-3,1){30}}
      \end{picture}}
    \put(220,0){\begin{picture}(110,110)(0,0)
       \multiput(0,0)(20,0){5}{\line(0,1){80}}  \multiput(0,0)(0,20){5}{\line(1,0){80}}
       \multiput(0,80)(20,0){5}{\line(1,1){20}} \multiput(0,80)(5,5){5}{\line(1,0){80}}
       \multiput(80,0)(0,20){5}{\line(1,1){20}}  \multiput(80,0)(5,5){5}{\line(0,1){80}}
    \put(80,0){\line(-1,1){80}}\put(80,0){\line(1,5){20}}\put(80,80){\line(-3,1){60}}
       \multiput(40,0)(40,40){2}{\line(-1,1){40}}
        \multiput(80,40)(10,-30){2}{\line(1,5){10}}
        \multiput(40,80)(50,10){2}{\line(-3,1){30}}

       \multiput(20,0)(60,60){2}{\line(-1,1){20}}   \multiput(60,0)(20,20){2}{\line(-1,1){60}}
        \multiput(80,60)(15,-45){2}{\line(1,5){5}} \multiput(80,20)(5,-15){2}{\line(1,5){15}}
        \multiput(20,80)(75,15){2}{\line(-3,1){15}}\multiput(60,80)(25,5){2}{\line(-3,1){45}}
      \end{picture}}

    \end{picture}
    \end{center}
\caption{  The first three levels of grids used in Example 4. }
\label{grid3}
\end{figure}
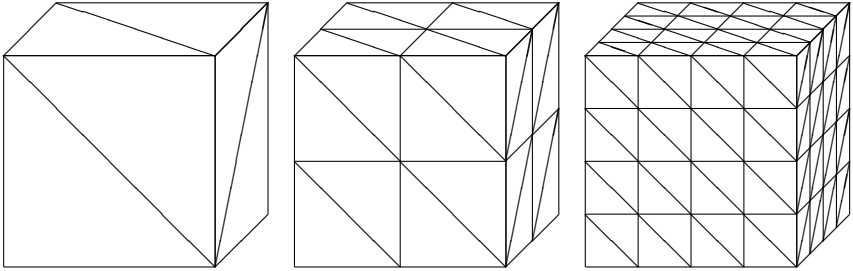

\begin{table}[h!]
  \centering \renewcommand{\arraystretch}{1.1}
  \caption{Example 4:  Error profiles and convergence rates on 3D grids shown in Figure \ref{grid3}. }
\label{t4}
\begin{tabular}{c|cc|cc}
\hline
level & $\|u_h- Q_h u\|_0 $  &rate &  $\3bar u_h- Q_h u\3bar $ &rate    \\
\hline
 &\multicolumn{4}{c}{by the $P_1$-$P_2$ WG finite element } \\ \hline
 1&     0.0748126&0.0&     0.9508644&0.0 \\
 2&     0.0120779&2.6&     0.1909763&2.3 \\
 3&     0.0010535&3.5&     0.0280478&2.8 \\
 4&     0.0000726&3.9&     0.0036750&2.9 \\
 5&     0.0000047&4.0&     0.0004657&3.0 \\
 6&     0.0000003&4.0&     0.0000584&3.0 \\
\hline
 &\multicolumn{4}{c}{by the $P_2$-$P_3$ WG finite element } \\ \hline
 1&     0.0106321&0.0&     0.4099718&0.0 \\
 2&     0.0019247&2.5&     0.0537392&2.9 \\
 3&     0.0000721&4.7&     0.0038352&3.8 \\
 4&     0.0000024&4.9&     0.0002476&4.0 \\
 5&     0.0000001&5.0&     0.0000156&4.0 \\
\hline
 &\multicolumn{4}{c}{by the $P_3$-$P_4$ WG finite element } \\ \hline
 1&     0.0170645&0.0&     0.3318098&0.0 \\
 2&     0.0003554&5.6&     0.0128696&4.7 \\
 3&     0.0000062&5.8&     0.0004504&4.8 \\
 4&     0.0000001&6.0&     0.0000145&5.0 \\ \hline
\end{tabular}%
\end{table}%

\appendix

\section*{Appendix}\label{Section:appendix}

We prove Lemma \ref{ML}  in the Appendix. First we need the following lemmas.

\smallskip

\begin{lemma}\label{eqn3}
	Let
	\begin{eqnarray*}
	Q_{1}(\bx)&=&\sum_{j=1}^{n}\sum_{i=1}^{n} a_{ij}x_{i}x_{j}=\bx^{T}A\bx,
	\end{eqnarray*}
and
	\begin{eqnarray*}
		Q_{2}(\bx)&=&\sum_{j=1}^{n}\sum_{i=1}^{n} b_{ij}x_{i}x_{j}=\bx^{T}B\bx,
\end{eqnarray*}
be two positive definite quadratic forms, where $\bx=(x_1,\cdots, x_n)$, $a_{ij}=a_{ji}$ and $b_{ij}=b_{ji}$. Then $\exists \lambda_{0}>0$ so that $\lambda Q_{1}(\bx)-Q_{2}(\bx)$ is a positive definite quadratic form for each $\lambda>\lambda_{0}$.
\end{lemma}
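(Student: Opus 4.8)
The plan is to exploit the homogeneity of quadratic forms together with the compactness of the unit sphere, reducing the question to an elementary minimum/maximum comparison. First I would restrict attention to the unit sphere $S=\{\bx\in\mathbb{R}^n:\|\bx\|=1\}$, which is compact, and note that both $Q_1$ and $Q_2$ are continuous functions of $\bx$. Since $Q_1$ is positive definite, its restriction to $S$ attains a minimum $m:=\min_{\bx\in S}Q_1(\bx)$ that is strictly positive; likewise $Q_2$ attains a finite maximum $M:=\max_{\bx\in S}Q_2(\bx)$ on $S$. These extrema exist precisely because $S$ is compact and the forms are continuous.

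Next I would set $\lambda_0:=M/m$. For any $\lambda>\lambda_0$ and any $\bx\in S$ one has $\lambda Q_1(\bx)-Q_2(\bx)\ge \lambda m-M>\lambda_0 m-M=0$, so $\lambda Q_1-Q_2$ is strictly positive on the whole unit sphere. Finally I would remove the normalization: every nonzero $\bx$ can be written as $\bx=r\by$ with $r=\|\bx\|>0$ and $\by\in S$, and since each $Q_i$ is homogeneous of degree two, $\lambda Q_1(\bx)-Q_2(\bx)=r^2\bigl(\lambda Q_1(\by)-Q_2(\by)\bigr)>0$. Hence $\lambda Q_1-Q_2$ is positive definite for every $\lambda>\lambda_0$, as claimed.

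The only delicate point, and hence the main obstacle, is the assertion $m>0$. This is where positive definiteness of $Q_1$ is used essentially: positivity of $Q_1(\bx)$ for each individual $\bx\neq 0$ does not by itself furnish a uniform lower bound, but combined with continuity and the compactness of $S$ it does, since a continuous positive function on a compact set is bounded below by a positive constant. An alternative route would be simultaneous diagonalization: because $A$ is symmetric positive definite, there is an invertible $P$ with $P^TAP=I$ and $P^TBP=D$ diagonal, whereupon the matrix $\lambda A-B$ becomes $\lambda I-D$ in the new coordinates and one simply takes $\lambda_0=\max_i d_{ii}$. I would record this as a remark, but prefer the compactness argument for its directness and its avoidance of any appeal to the structure theory of pencils of symmetric matrices.
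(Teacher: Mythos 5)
Your proof is correct, and it takes a genuinely different route from the paper. The paper argues by linear algebra on the matrix pencil: it writes $A=A^{1/2}A^{1/2}$, factors $\lambda A-B=A^{1/2}\bigl(\lambda I-A^{-1/2}BA^{-1/2}\bigr)A^{1/2}$, orthogonally diagonalizes $C=A^{-1/2}BA^{-1/2}$, and takes $\lambda_0$ to be the largest eigenvalue of $C$; positive definiteness of $\lambda I-C$ for $\lambda>\lambda_0$ then transfers to $\lambda A-B$ by congruence. Your compactness argument avoids matrix square roots and the spectral theorem entirely: positivity and continuity of $Q_1$ on the unit sphere give $m=\min_S Q_1>0$, and $\lambda_0=M/m$ with $M=\max_S Q_2$ works, with degree-two homogeneity extending positivity off the sphere. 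What each buys: the paper's argument identifies the \emph{sharp} threshold, namely $\lambda_0=\max_{\bx\neq 0} Q_2(\bx)/Q_1(\bx)$, the largest generalized eigenvalue of the pencil $(B,A)$, whereas your $M/m$ only dominates this ratio and may be strictly larger --- which is harmless, since the lemma asserts only existence. In exchange, your argument is more elementary and more general: it applies verbatim to any continuous functions that are positive off the origin and homogeneous of degree two, quadratic or not, and it makes transparent that positive definiteness of $Q_2$ is never needed (only that $M<\infty$, i.e.\ continuity); the same remark applies to the paper's proof, where only symmetry of $B$ is actually used. Your closing observation that simultaneous diagonalization ($P^TAP=I$, $P^TBP=D$, $\lambda_0=\max_i d_{ii}$) gives an alternative proof is essentially a restatement of the paper's argument, so you have in effect recovered both proofs and chosen the other one as primary.
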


\begin{proof}
  We need to show that there exists a $\lambda_{0}>0$ such that $\lambda A-B$ is positive definite for $\lambda>\lambda_{0}$. Since $A$ is positive definite, $A=A^{\frac{1}{2}}A^{\frac{1}{2}}$, where $A^{\frac{1}{2}}$ is also symmetric positive definite with symmetric positive definite inverse. Thus
 \begin{eqnarray*}
 \lambda A-B=A^{\frac{1}{2}}\left( \lambda I-A^{\frac{-1}{2}}BA^{\frac{-1}{2}}\right) A^{\frac{1}{2}}.
 \end{eqnarray*}
Let $C=A^{\frac{-1}{2}}BA^{\frac{-1}{2}}$. Then $C$ is positive definite and thus
\begin{eqnarray}
C=P^{T}\begin{bmatrix}
\lambda_{0} &  &  &  \\
  &   & \ddots &   \\
 &  &  & \lambda_{n-1}
\end{bmatrix}P,
\end{eqnarray}
where $\lambda_{0}\geq\lambda_{1}\geq\dots\geq\lambda_{n-1}>0$ and $P$ is an orthogonal matrix. Thus we can write
\begin{eqnarray}
\lambda I-C=P^{T}\begin{bmatrix}
\lambda-\lambda_{0} &  &  &  \\
&   & \ddots & 0  \\
& 0 &  & \lambda-\lambda_{n-1}
\end{bmatrix}P.
\end{eqnarray}
Obviously, $\lambda I-C$ is positive definite when $\lambda>\lambda_{0}$ and so is $\lambda A-B$.
\end{proof}

\smallskip

\begin{lemma}\label{mainlemma}
	For any  $v\in V_{h}, \mbox{ if } \nabla_w v|_{T_{i}}\in [P_{k+1}(T_{i})]^{2}, \forall i=1,2, T_{1}\cap T_{2}=e_{1}$, then
	\begin{eqnarray}
		\|v^{(1)}_{0}-v^{(2)}_{0}\|^{2}_{e_{1}}\leq Ch_{T_{1}}\| \nabla_w \|^{2}_{T_{1}\cup T_{2}},\label{las1}
	\end{eqnarray}
where $v_{0}^{(i)}=v_{0}|_{T_{i}}, i=1,2$.
\end{lemma}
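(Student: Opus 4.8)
The plan is to read (\ref{las1}) as a bound of one quadratic form by another on the fixed, finite-dimensional space of local degrees of freedom of $v$ on the patch $T_1\cup T_2$, and to close the estimate with Lemma \ref{eqn3}. First I would transfer everything to a reference patch $\hat T_1\cup\hat T_2$ of unit diameter via the affine map $F$ supplied by shape regularity; since the mesh need not be uniform, $F$ ranges over a compact family of admissible reference configurations. Writing $\hat v=\{v_0\circ F,\,v_b\circ F\}$ and using that the weak gradient defined by (\ref{d-d}) transforms under an affine pullback like an ordinary gradient, the two sides of (\ref{las1}) scale as $\|v_0^{(1)}-v_0^{(2)}\|_{e_1}^2\simeq h_{T_1}\|\hat v_0^{(1)}-\hat v_0^{(2)}\|_{\hat e_1}^2$ and $\|\nabla_w v\|_{T_1\cup T_2}^2\simeq\|\hat\nabla_w\hat v\|_{\hat T_1\cup\hat T_2}^2$. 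Thus (\ref{las1}) is equivalent to the scale-free reference inequality
\[
\|\hat v_0^{(1)}-\hat v_0^{(2)}\|_{\hat e_1}^2\le C\,\|\hat\nabla_w\hat v\|_{\hat T_1\cup\hat T_2}^2 ,
\]
in which the power $h_{T_1}$ has already been absorbed and $C$ must be taken uniformly over the reference family. On a fixed reference patch both sides are quadratic forms in the coefficient vector of $\hat v|_{\hat T_1\cup\hat T_2}$; denote by $Q_1$ the weak-gradient energy on the right and by $Q_2$ the edge jump on the left.

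The heart of the matter is the kernel inclusion $\ker Q_1\subseteq\ker Q_2$. To establish it I would argue directly from the identity (\ref{n-1}): the vanishing of $\nabla_w v$ on an element $T$ means $(\nabla v_0,\bq)_T+\langle v_b-v_0,\bq\cdot\bn\rangle_{\partial T}=0$ for every $\bq\in[P_{k+1}(T)]^2$. Exploiting the richness of this degree-$(k+1)$ test space, in particular that it contains $\nabla P_{k+2}(T)$ and that the normal traces $\{\bq\cdot\bn\}$ realize every element of $P_{k+1}(e)$ on each edge $e\subset\partial T$, one extracts that $v_b=v_0$ on $\partial T$, which in turn forces $(\nabla v_0,\bq)_T=0$ for all $\bq$ and hence $v_0$ constant. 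Because $v\in V_h$ has a single-valued $v_b$ on the shared edge $e_1$, if $\nabla_w v$ vanishes on both $T_1$ and $T_2$ then $v_0^{(1)}=v_b=v_0^{(2)}$ along $e_1$, so the jump $Q_2$ vanishes; this is exactly $\ker Q_1\subseteq\ker Q_2$.

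With the inclusion in hand I would descend to the quotient by $\ker Q_1$, on which $Q_1$ is positive definite and $Q_2$ is positive semidefinite, and apply Lemma \ref{eqn3} (with $A$ the matrix of $Q_1$ and $B$ that of $Q_2$) to produce a $\lambda_0>0$ with $Q_2\le\lambda_0 Q_1$ on the reference patch. Undoing the scaling reinstates the factor $h_{T_1}$ and yields (\ref{las1}). I expect the self-contained kernel characterization to be the main obstacle: the global injectivity of $\nabla_w$ is itself only established later in the paper (through this very lemma, via Lemma \ref{ML} and the norm equivalence (\ref{happy})), so here it cannot be quoted and must instead be wrung out directly from the polynomial algebra of the test space. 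A secondary, more routine, technical point is the uniformity of the constant $C$ across the shape-regular family of reference patches, which I would settle by a standard compactness argument over that compact family.
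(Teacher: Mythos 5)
Your overall architecture --- pull back to a unit-size patch, regard both sides of (\ref{las1}) as quadratic forms in the finitely many local degrees of freedom, prove the kernel inclusion $\ker Q_1\subseteq\ker Q_2$, then invoke Lemma \ref{eqn3} and undo the scaling --- is viable, but it is not how the paper proves Lemma \ref{mainlemma}; it is, almost verbatim, how the paper proves the \emph{next} lemma, Lemma \ref{lastlemma} (there the authors assume $\nabla_w v|_{T_1\cup T_2}=0$, deduce that all jumps vanish, observe that this makes the relevant form positive definite, and finish with Lemma \ref{eqn3} and scaling). For Lemma \ref{mainlemma} itself the paper argues constructively: it builds an explicit test function $\bq$ from edge cutoff factors $L_3,L_5$, tangent directions $\bt_2,\bt_4$, a polynomial extension of the jump, and correction terms $yQ_{k-1}^{(i)}$ chosen so that $(Q^{(i)},p)_{T_i}=0$ for all $p\in P_{k-1}(T_i)$, hence $(\nabla v_0,\bq)_{T_i}=0$ by (\ref{n-1}); this makes $(\nabla_w v,\bq)_{T_1\cup T_2}$ a weighted $L^2$ norm of $v_0^{(2)}-v_0^{(1)}$ on $e_1$ while $\|\bq\|_{T_i}\le C\|v_0^{(2)}-v_0^{(1)}\|_{e_1}$, and Cauchy--Schwarz gives (\ref{las1}), Lemma \ref{eqn3} entering only to compare the auxiliary forms. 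So your plan is a genuinely different route and, if completed, would in fact deliver the stronger per-edge statement of Lemma \ref{lastlemma} directly.

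The genuine gap sits exactly where you predicted, and as written the step would fail. You argue: the normal traces of $[P_{k+1}(T)]^2$ realize all of $P_{k+1}(e)$ on each edge, \emph{hence} $v_b=v_0$ on $\partial T$, \emph{hence} $(\nabla v_0,\bq)_T=0$ for all $\bq$, \emph{hence} $v_0$ is constant. This order cannot work: to extract $v_b=v_0$ on an edge $e_1$ from $(\nabla v_0,\bq)_T+\langle v_b-v_0,\bq\cdot\bn\rangle_\pT=0$ you need a $\bq$ whose normal trace on $e_1$ equals $(v_b-v_0)|_{e_1}$, whose normal traces vanish on the other two edges, \emph{and} whose volume pairing with $\nabla v_0$ vanishes; trace surjectivity provides the first two properties but says nothing about the third, and the inclusion $\nabla P_{k+2}(T)\subset[P_{k+1}(T)]^2$ does not help. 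The deduction must run in the opposite order. First kill the gradient: with $\lambda_m$ the barycentric coordinate vanishing on edge $e_m$, the functions $\bq=\lambda_j\lambda_k\,p\,\bt_i$, $p\in P_{k-1}(T)$, $\{i,j,k\}=\{1,2,3\}$, belong to $[P_{k+1}(T)]^2$ and have identically vanishing normal traces on all three edges, so $\nabla_w v|_T=0$ forces $\int_T \lambda_j\lambda_k\,(\nabla v_0\cdot\bt_i)^2\,d\bx=0$ upon taking $p=\nabla v_0\cdot\bt_i$, i.e.\ $\nabla v_0\equiv 0$. Only then do trace-realizing choices such as $\bq=\lambda_3\alpha\,\bt_2+\lambda_2\beta\,\bt_3$ with $\alpha,\beta\in P_k(T)$ (zero normal traces on $e_2,e_3$; normal trace $\lambda_3(\bt_2\cdot\bn_1)\alpha+\lambda_2(\bt_3\cdot\bn_1)\beta$ on $e_1$, which spans all of $P_{k+1}(e_1)$) yield $\|v_b-v_0\|^2_{e_1}=0$, the volume term now being absent; single-valuedness of $v_b$ across $e_1$ then gives $\ker Q_1\subseteq\ker Q_2$. (Equivalently, one corrects $\bq$ by a zero-trace function to make it orthogonal to $\nabla P_k(T)$ --- which is precisely the role of the paper's $yQ_{k-1}^{(i)}$ terms.) A second, minor, repair: Lemma \ref{eqn3} is stated for two positive \emph{definite} forms, whereas on the quotient by $\ker Q_1$ your $Q_2$ is only positive semidefinite; the lemma's proof (diagonalizing $A^{-1/2}BA^{-1/2}$) covers that case verbatim, but you should note this rather than cite the lemma as stated.
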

\begin{proof}
Without loss of generality, we may assume that the vertices of $T_{2}$ are $(0,0), (1,0), \mbox{ and } (0,1)$,
	\begin{eqnarray*}
		e_{1}=\{(x,0)|0\leq x\leq 1\},
	\end{eqnarray*}
	and the other edge of $T_{1}$ is $(a_{1},b_{1})$, where $b_{1}<0$.
	Denote $v_{0}^{(i)}=v_{0}|_{T_{i}}, i=1,2$.

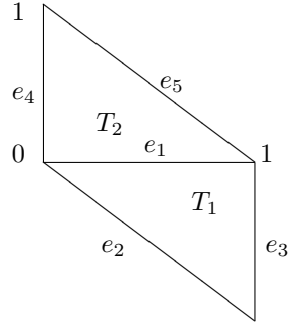
\begin{figure}[h!]
 \begin{center} \setlength\unitlength{2pt}
\begin{picture}(50,70)(0,-35)
 \put(0,0){\line(1,0){40}}\put(0,0){\line(0,1){30}}\put(0,30){\line(4,-3){40}}
 \put(0,0){\line(4,-3){40}}\put(40,0){\line(0,-1){30}}
 \put(-6,0){$0$} \put(41,0){$1$}  \put(-6,27){$1$}
 \put(-6,12){$e_4$} \put(22,14){$e_5$}  \put(19,2){$e_1$} \put(10,6){$T_2$}
 \put(11,-17){$e_2$} \put(28,-9){$T_1$}  \put(42,-17){$e_3$}
 \end{picture}\end{center}
		\caption{The mesh with two triangles}\label{fi1}
\end{figure}

	Let $\bt_{2}\mbox{ and } \bt_{4}$ be unit tangents to $e_{2} \mbox{ and } e_{4}$, respectively; $L_{3}\mbox{ and } L_{5}$ be linear functions such that $L_{3}|_{e_{3}}=0 \mbox{ and } L_{5}|_{e_{5}}=0$. Let
	\begin{eqnarray*}
		\bq_{1}=\bq|_{T_{1}}=L_{3}(x,y)\left( v_{b}^{(1)}-v_{0}^{(1)}+yQ^{(1)}_{k-1}\right) \bt_{2}=Q^{(1)}\bt_{2},
	\end{eqnarray*}
	and
	\begin{eqnarray*}
		\bq_{2}=\bq|_{T_{2}}=L_{5}(x,y)\left( v_{b}^{(2)}-v_{0}^{(2)}+yQ^{(2)}_{k-1}(x,y)\right) \bt_{4}=Q^{(2)}\bt_{4},
	\end{eqnarray*}
	where $Q_{k-1}^{(i)}$ are such that
	\begin{eqnarray}
	( Q^{(i)},p)_{T_{i}}=0,\quad\forall p\in P_{k-1}(T_{i}).\label{2.19 eq}
	\end{eqnarray}
	 By setting $v_{0}^{(2)}-v_{0}^{(1)}=0$ in $Q^{(i)}\mbox{ and } p=Q_{k-1}^{(i)}$ in (\ref{2.19 eq}), we know such $Q_{k+1}^{(i)}$ exist and are unique. Then
	\begin{eqnarray*}
		\bq_{i}\cdot\bn^{(2)}_{i}=0,\quad \bq_{i}|_{e_{2i+1}}=0, i=1,2.
	\end{eqnarray*}
	Scale $L_{1}\mbox{ and } L_{3}$ if necessary so that
	\begin{eqnarray*}
		-L_{3}(0,0)\bt_{2}\cdot\bn_{1}^{(1)}=1=L_{5}(0,0)\bt_{4}\cdot\bn_{1}^{(2)},
	\end{eqnarray*}
	where $\bn_{1}^{(i)}$ is the unit outwards normal vector of $e_{1}\in \partial T_{i},i=1,2$.\\ Since $L_{5}(1,0)\bt_{2}\cdot\bn_{1}^{(2)}=0$,
	\begin{eqnarray*}
		L_{5}(x,y)\bt_{4}\cdot\bn_{1}^{(2)}=\tilde{L}_{5}(x,y)=1-x-y.
	\end{eqnarray*}
	Similarly
	\begin{eqnarray*}
		L_{3}(x,y)\bt_{2}\cdot\bn_{1}^{(2)}=\hat{L}_{3}(x,y)=1-x+\alpha y, \mbox{ for some } \alpha.
	\end{eqnarray*}
	It follows from the shape regularity assumptions that the slope of $e_{3}, \frac{1}{\alpha}$, satisfies $|\frac{1}{\alpha}|\geq \alpha_{0}>0$ for some $\alpha_{0}$. Since $\tilde{L_{3}}|_{e_{1}}=\tilde{L_{5}}|_{e_{1}}$,
	\begin{eqnarray*}
		(\nabla_w v,\bq)_{T_{1}\cup T_{2}}&=&\left\langle v_{b}-v_{0},\bq\cdot\bn\right\rangle _{\partial
			T_{1}}+\left\langle v_{b}-v_{0},\bq\cdot\bn\right\rangle _{\partial T_{2}}\\&=&\left\langle v_{0}^{(2)}-v_{0}^{(1)}, (v_{0}^{(2)}-v_{0}^{(1)})\tilde{L_{3}}\right\rangle _{e_{1}}.
	\end{eqnarray*}
	Note that $0\leq \tilde{L_{5}}(x,y)\leq 1\mbox{ on } T_{1}\mbox{ and } 0\leq\tilde{L_{3}}(x,y)\leq 1\mbox{ on } T_{2}$. Write
	\begin{eqnarray*}
	(v_{0}^{(2)}-v_{0}^{(1)})|_{e_{1}}(x)=a_{0}+\dots+a_{k}x^{k}=P(x).
	\end{eqnarray*}
Let $\alpha=[a_{0},\dots,a_{k}]^{T}$. Then
\begin{eqnarray*}
\|v_{0}^{(2)}-v_{0}^{(1)}\|_{e_{1}}^{2}=\alpha^{T}A\alpha.
\end{eqnarray*}
	\begin{eqnarray*}
		(\nabla_w v, \bq)_{T_{1}\cup T_{2}}&=&\left\langle v_{0}^{(2)}-v_{0}^{(1)}, (v_{0}^{(2)}-v_{0}^{(1)})\tilde{L_{3}}(x,y)\right\rangle _{e_{1}}\\&=&\int_{0}^{1}(v_{0}^{(2)}-v_{0}^{(1)})^{2}(1-x)d x\\&=&\alpha^{T}B\alpha,
	\end{eqnarray*}
are positive definite quadratic forms in $a_{0},\dots,a_{k}$.
Note that
	\begin{eqnarray*}
		\|\bq\|_{T_{2}}^{2}&=&\int_{T_{2}}(1-x-y)^{2}(Q^{(2)})^{2}d A\\&\leq&C\left( \int_{T_{2}}(v_{b}^{(2)}-v_{0}^{(1)})dA+\int_{T_{2}}(Q_{k-1}^{(1)})^{2}dA\right)\\& \leq&C\left( \alpha^{T}D\alpha+\alpha^{T}E\alpha\right)
	\end{eqnarray*}
is a positive definite quadratic form. Similarly,
\begin{eqnarray*}
\|\bq\|_{T_{1}}^{2}=\alpha^{T}G\alpha
\end{eqnarray*}
is also a positive definite quadratic form. So by Lemma \ref{eqn3},
\begin{eqnarray*}
\|\bq\|_{T_{i}}^{2}\leq C\|v_{0}^{(2)}-v_{0}^{(1)}\|^{2}_{e_{1}}, i=1,2,
\end{eqnarray*}
and
\begin{eqnarray*}
C(\nabla_w v,\bq)_{T_{1}\cup T_{2}}\geq\|v_{0}^{(2)}-v_{0}^{(1)}\|_{e_{1}}^{2}.
\end{eqnarray*}
Thus
	\begin{eqnarray*}
		\|v_{0}^{(2)}-v_{0}^{(1)}\|_{e_{1}}^{2}&\leq& C|(\nabla_w v,\bq)_{T_{1}\cup T_{2}}|\\&\leq& C\left( \|\nabla_w v\|_{T_{1}}\|\bq\|_{T_{1}}+\|\nabla_w v\|_{T_{2}}\|\bq\|_{T_{2}}\right) \\&\leq& C\left( \|\nabla_w v\|_{T_{1}}+\|\nabla_w v\|_{T_{2}}\right)  \|v_{0}^{(2)}-v_{0}^{(1)}\|_{e_{1}},
	\end{eqnarray*}
for some $C$. It is worthwhile to note that this $C$ is independent of $h$ in some sense. If we want to change the scale from $1$ to $h$, we only need to replace $a_{i}$ by $a_{i}h^{i}$. Because of the dimensional differences of the two sides, $C$ will be replace by $Ch$. Thus after a scaling we have
	\begin{eqnarray*}
		 \|v_{0}^{(2)}-v_{0}^{(1)}\|_{e_{1}}\leq Ch_{T_{2}}^{\frac{1}{2}}\left( \|\nabla_w v\|_{T_{1}}+\|\nabla_w v\|_{T_{2}}\right) .
	\end{eqnarray*}
\end{proof}

\smallskip

\begin{lemma}\label{lastlemma}
Let $T_{1}$ and $T_{2}$ be such as in Lemma \ref{mainlemma},  then
	\begin{eqnarray}
\|v_{b}-v_{0}\|^{2}_{\partial T_{1}\cup \partial T_{2}}\leq Ch_{T_{1}}\|\nabla_w v\|^{2}_{T_{1}\cup T_{2}}.\label{20lemma}
	\end{eqnarray}
\end{lemma}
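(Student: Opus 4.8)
```latex
The plan is to bound the quantity $\|v_b - v_0\|^2_{\partial T_1 \cup \partial T_2}$ by decomposing the boundary into its constituent edges and handling the interior edge $e_1$ separately from the five boundary edges $e_2,\dots,e_5$. On the shared edge $e_1$, the single-valued trace $v_b$ appears in both $\|v_b - v_0^{(1)}\|_{e_1}^2$ and $\|v_b - v_0^{(2)}\|_{e_1}^2$, so I would first invoke the triangle inequality to split off the jump of the interior values, writing $\|v_b - v_0^{(i)}\|_{e_1} \le \|v_b - v_0^{(j)}\|_{e_1} + \|v_0^{(1)} - v_0^{(2)}\|_{e_1}$ as needed, and then apply Lemma \ref{mainlemma} (inequality (\ref{las1})) to control the cross term $\|v_0^{(1)} - v_0^{(2)}\|_{e_1}^2 \le C h_{T_1} \|\nabla_w v\|^2_{T_1 \cup T_2}$.

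The remaining task is to bound the contributions from the outer edges, where only one triangle is involved on each edge. First I would test the definition of the weak gradient (\ref{d-d}), or rather the equivalent local form (\ref{n-1}), against a carefully chosen test function $\bq \in [P_{k+1}(T_i)]^2$. The natural choice, mirroring the construction in Lemma \ref{mainlemma}, is to build $\bq$ on each triangle $T_i$ so that $\bq \cdot \bn$ reproduces the quantity $v_b - v_0$ on the targeted outer edge while vanishing on the other edges of that triangle; this forces the boundary integral in (\ref{n-1}) to isolate $\langle v_b - v_0, \bq\cdot\bn\rangle$ on a single edge. One then relates $\|v_b - v_0\|^2_{e}$ to $(\nabla_w v, \bq)_{T_i}$ via a positive-definite quadratic form argument as in Lemma \ref{eqn3}, and estimates $\|\bq\|_{T_i}$ by the same quadratic-form equivalence, yielding $\|v_b - v_0\|^2_{e} \le C h_{T_i} \|\nabla_w v\|^2_{T_i}$ after the standard scaling from the reference configuration.

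Summing over $e_1$ together with all the outer edges $e_2,\dots,e_5$ and absorbing the geometric constants (which are uniform under the shape-regularity assumption) would then give the desired bound (\ref{20lemma}).

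The main obstacle I anticipate is the interior edge $e_1$: unlike the outer edges, the difference $v_b - v_0$ there involves a single-valued $v_b$ shared between two triangles with distinct interior polynomials $v_0^{(1)}$ and $v_0^{(2)}$, so the estimate cannot be obtained on $T_1$ or $T_2$ in isolation. The resolution hinges precisely on Lemma \ref{mainlemma}, which couples the two triangles and bounds the interior jump $\|v_0^{(1)} - v_0^{(2)}\|_{e_1}$ by the weak gradient over $T_1 \cup T_2$; the rest of the argument is then a matter of triangle-inequality bookkeeping and the single-edge quadratic-form estimates, which are routine once the test functions are chosen. A secondary technical care is ensuring that the constant $C$ remains $h$-independent after the affine scaling, which is handled exactly as in the closing paragraph of Lemma \ref{mainlemma} by tracking the dimensional balance between the two sides.
```
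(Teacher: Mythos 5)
Your plan has a genuine gap, and it sits exactly where you yourself flag the difficulty: the interior edge $e_1$. Lemma \ref{mainlemma} controls only the jump $\|v_0^{(1)}-v_0^{(2)}\|_{e_1}$ of the two interior polynomials; it gives no information whatsoever about the edge unknown $v_b$ on $e_1$. Your triangle inequality $\|v_b-v_0^{(i)}\|_{e_1}\le\|v_b-v_0^{(j)}\|_{e_1}+\|v_0^{(1)}-v_0^{(2)}\|_{e_1}$ merely trades one of the two quantities to be estimated for the other, so after invoking (\ref{las1}) you are left with a term $\|v_b-v_0^{(j)}\|_{e_1}$ that nothing in your proposal bounds by $\|\nabla_w v\|_{T_1\cup T_2}$. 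Since you simultaneously assert that the $e_1$ estimate ``cannot be obtained on $T_1$ or $T_2$ in isolation,'' you have excluded the single-edge construction on $e_1$, and then jump bound plus triangle inequality is a closed circle: the argument never terminates. The paper closes precisely this hole with a \emph{separate} test-function construction for $e_1$ (the field $\bq=(v_b-v_0^{(1)}+(x-y)Q_k)\bn_1$ around (\ref{eq6n}), with $v_b$ extended into $T_1$), and that construction is only carried out under the hypothesis $\nabla_w v|_{T_1\cup T_2}=0$, \emph{after} the vanishing of $v_b-v_0$ on $e_2,e_3$ has already been established, so that the unwanted boundary terms drop out.

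This points to the second, structural, discrepancy. The paper does not prove (\ref{20lemma}) by direct edge-by-edge estimates at all: it proves a rigidity statement ($\nabla_w v|_{T_1\cup T_2}=0$ implies $v_b-v_0=0$ on all five edges and $\nabla v_0=0$), assembles both sides of (\ref{20lemma}) as quadratic forms $\vec{\gamma}^T A\vec{\gamma}$ and $\vec{\gamma}^T B\vec{\gamma}$ in the finitely many coefficients, uses the kernel inclusion to conclude $B$ is positive definite, and only then obtains the inequality from the quadratic-form comparison of Lemma \ref{eqn3} followed by a scaling step. Your direct route instead requires, on each triangle, a field $\bq\in[P_{k+1}(T)]^2$ whose normal trace \emph{equals} $v_b-v_0$ on the target edge, \emph{vanishes identically} on the other two edges, is orthogonal to $\nabla P_k(T)$ (to kill the volume term in (\ref{n-1})), and satisfies $\|\bq\|_T\le Ch_T^{1/2}\|v_b-v_0\|_e$. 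Such a field does exist --- this is BDM-type unisolvence of $[P_{k+1}(T)]^2$ with respect to edge-normal moments against $P_{k+1}(e)$ and interior moments --- but that argument appears neither in your proposal nor in the paper, and it is \emph{not} obtained by ``mirroring'' Lemma \ref{mainlemma}: the fields there are tangential ($Q^{(1)}\bt_2$, $Q^{(2)}\bt_4$) with a linear cut-off factor, their normal trace on the target edge is a \emph{weighted} copy $(1-x)(\cdot)$ of the data rather than the data itself (which is exactly why Lemma \ref{eqn3} is needed there), and they are engineered so that $v_b$ cancels between the two triangles, leaving only the jump. Had you actually supplied the unisolvence argument, it would handle every edge --- including $e_1$, one triangle at a time --- and render Lemma \ref{mainlemma} and the two-triangle coupling unnecessary; as written, the proposal neither contains that ingredient nor reproduces the paper's kernel-plus-quadratic-form mechanism, so the outer-edge step is unproven and the interior-edge step fails.
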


\begin{proof}
Without loss, we may assume that $T_{1}$ and $T_{2}$ are as shown in the Figure \ref{fi2}, where $e_{1}\cup e_{2}\cup e_{3}=\partial T_{1}, e_{1}\cup e_{4}\cup e_{5}=\partial T_{2}\mbox{ and } e_{1}=\partial T_{1}\cap \partial T_{2}$.

\begin{figure}[h!]
 \begin{center} \setlength\unitlength{2.5pt}
\begin{picture}(60,40)(-10,0)
 \put(0,0){\line(6,1){60}}\put(40,40){\line(2,-3){20}}\put(0,0){\line(0,1){40}}
 \put(0,40){\line(1,0){40}}\put(0,0){\line(1,1){40}}
  \put(-10,0){$(0,0)$} \put(-10,38){$(0,1)$} \put(42,38){$(1,1)$}  \put(58,6){$(p_1,p_2)$}
   \put(-6,18){$e_3$}  \put(25,0){$e_4$} \put(33,15){$T_2$} \put(15,29){$T_1$}
   \put(17,21){$e_1$}\put(17,41){$e_2$}  \put(55,22){$e_5$}
 \end{picture}\end{center}
		\caption{Two neighboring triangles}\label{fi2}
\end{figure}
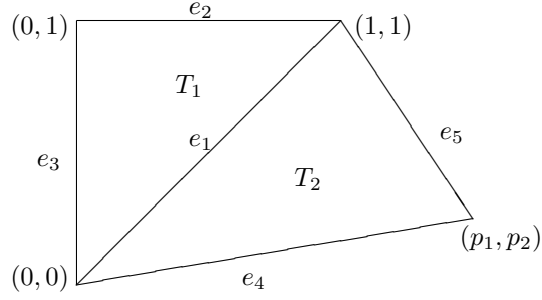

 Let us assume that $\nabla_w v|_{T_{1}\cup T_{2}}=0$. If follows from Lemma \ref{mainlemma} that
\begin{eqnarray*}
\|v_{0}^{(1)}-v_{0}^{(2)}\|^{2}_{e_{1}}\leq Ch_{T_{1}}\|\nabla_w v\|^{2}_{T_{1}\cup T_{2}}=0.
\end{eqnarray*}
We want to show
\begin{eqnarray}
	\|v_{b}-v_{0}\|^{2}_{e_{4}}\leq Ch_{T_{1}}\|\nabla_w v\|^{2}_{T_{1}\cup T_{2}}=0\label{eqnew1}
\end{eqnarray}
first.
 Let $L_{2}(x,y)=1-y$, then $L_{2}=0$ on $e_{2}$. Denote by $\bn_{j}^{(i)}$ the unite outer ward normal vector to $\partial T_{i} \cap e_{j}, i=1,2, j=1,\dots,5$. Let $\bt_{3}$ and $\bt_5$ be  unit tangent vectors to $e_{3}$ and $e_{5}$ respectively. Let
	\begin{eqnarray*}
	\bq|_{T_{1}}=\bq_{1}=Q^{(1)}\bt_{3},
	\end{eqnarray*}
where $Q^{(1)}=L_{2}Q_{k}^{(1)}, Q_{k}^{(1)}\in P_{k}(T_{1})$. Write
\begin{eqnarray*}
Q_{k}^{(1)}(x,y)=p_{k}(x)+(x-y)p_{k-1}(x,y),
\end{eqnarray*}
where $p_{k}(x)\in P_{k}(T_{1}), p_{k-1}(x,y)\in P_{k-1}(T_{1})$. We want to show that for each $p_{k}(x)$, we can find a unique $p_{k-1}(x,y)$ so that
\begin{eqnarray}
(Q^{(1)},p)=0,\quad \forall p\in P_{k-1}(T_{1}).
\end{eqnarray}
To do that we set $p_{k}=0$ and $p=p_{k-1}(x,y)$. Then
\begin{eqnarray*}
\int_{T_{1}}(1-y)(x-y)p_{k-1}^{2} dA=0,
\end{eqnarray*}
which implies that
\begin{eqnarray*}
p_{k-1}=0.
\end{eqnarray*}
This implies the existence and the uniqueness of $p_{k-1}$. Now let
\begin{eqnarray*}
\bq|_{T_{2}}=\bq_{2}=Q^{(2)}\bt_{5},
\end{eqnarray*}
where
\begin{eqnarray*}
Q^{(2)}(x,x)\bt_{5}\cdot\bn_{1}^{(2)}&=&-\bt_{3}\cdot\bn_{1}^{(1)}(1-x)p_{k}(x)\\&=&-\bt_{3}\cdot\bn_{1}^{(1)}Q^{(1)}(x,x).
\end{eqnarray*}
Thus
\begin{eqnarray*}
\left\langle \bq_{1}\cdot\bn_{1}^{(1)},v_{b}-v_{0}^{(1)}\right\rangle_{e_{1}}+\left\langle \bq_{2}\cdot\bn_{1}^{(2)},v_{b}-v_{0}^{(2)}\right\rangle_{e_{1}}=0,
\end{eqnarray*}
since $v_{0}^{(1)}=v_{0}^{(2)}$ and $\bq_{1}=\bq_{2} \mbox{ on } e_{1}$. Without loss, we assume $\bt_{5}\cdot\bn_{4}=1$.
Let $\delta$ be such that $\delta \bt_5\cdot\bn_1^{(2)}=-\bt_3\cdot\bn_1^{(1)}$.
 Write
\begin{eqnarray*}
	Q^{(2)}(x,y)=\delta (1-x)p_{k}(x)+(x-y)\left[ Q^{(2)}_{k}(x)+yR_{k-1}(x,y)\right],
\end{eqnarray*}
Write
\begin{eqnarray*}
(v_{b}-v_{0})|_{e_{4}}(x)&=&V_{k+1}(x)=v_{0}+\dots+v_{k+1}x^{k+1},\\
 p_{k}(x)&=&a_{0}+\dots+a_{k}x^k,
\end{eqnarray*}
and
\begin{eqnarray*}
xQ_{k}^{(2)}(x)=b_{1}x+\dots+b_{k+1}x^{k+1}
\end{eqnarray*}
such that
\begin{eqnarray*}
\delta a_{0}&=&v_{0},\\\delta (a_{1}+b_{1}-a_{0})&=&v_{1},\\\delta (a_{2}+b_{2}-a_{1})&=&v_{2}\\\vdots\\ \delta (b_{k+1}-a_{k})&=&v_{k+1}.
\end{eqnarray*}
Then
\begin{eqnarray*}
\delta(1-x)p_{k}(x)+xQ^{(2)}_{k}(x)=(v_{b}-v_{0})|_{e_{4}}
\end{eqnarray*}
and thus
\begin{eqnarray}
\left\langle Q^{(2)},v_{b}-v_{0}\right\rangle_{e_{4}}=\|v_{b}-v_{0}\|_{e_{4}}^{2}.\label{eq3n}\end{eqnarray}
Further, let
\begin{eqnarray*}
b_{1}=b_{2}=\dots=b_{k}=0.
\end{eqnarray*}
Then
\begin{eqnarray*}
	\delta a_{0}&=&v_{0},\\\delta a_{1}&=&v_{1}+v_{0},\\\vdots\\\delta a_{k}&=&v_{k}+v_{k-1},\\\delta b_{k+1}&=&v_{k+1}+v_{k}+v_{k-1}.
\end{eqnarray*}
Thus, $V=0$ implies  $p_{k}=Q_{k}^{(2)}=0$. Next we want to choose $R_{k-1}(x,y)$ so that
\begin{eqnarray}
(Q^{(2)},p)_{T_{2}}\equiv 0,\quad \forall p\in P_{k-1}(T_{2}).\label{eq4n}
\end{eqnarray}
To see if (\ref{eq4n}) has a unique solution, we set $V=0$. Then $p_{k}=Q_{k}^{(2)}=0$. Thus (\ref{eq4n}) becomes
\begin{eqnarray*}
\left( (x-y)yR_{k-1},p\right)_{T_{2}}=0,\quad \forall p\in P_{k-1}(T_{2}).
\end{eqnarray*}
It is easy to see that $R_{k-1}=0$. Thus
\begin{eqnarray*}
0=(\nabla_w v,\bq)_{T_{1}\cup T_{2}}=\left\langle v_{b}-v_{0},\bq\cdot\bn\right\rangle _{e_{4}}=\|v_{b}-v_{0}\|_{e_{4}}^{2}.
\end{eqnarray*}
Similarly, we can show that
\begin{eqnarray*}
\|v_{b}-v_{0}\|_{e_{i}}^{2}=0,\quad i=2,3,5.
\end{eqnarray*}
Now let's look at $\|v_{b}-v_{0}^{(1)}\|_{e_{1}}$. Since $\|\nabla_w v\|_{T_{1}}=0, \|v_{b}-v_{0}^{(1)}\|_{e_{i}}=0, i=2,3,$
\begin{eqnarray}
-(\nabla v,\bq)_{T_{1}}=\left\langle v_{b}-v_{0}^{(1)},\bq\cdot\bn_{1}\right\rangle_{e_{1}},\quad \forall\bq\in [P_{k+1}(T)]^{2}.\label{eq6n}
\end{eqnarray}
Let
\begin{eqnarray*}
\bq&=&(v_{b}-v_{0}^{(1)}+(x-y)Q_{k}(x,y))\bn_{1}\\&=&Q\bn_{1},
\end{eqnarray*}
where $Q_{k}$ is such that
\begin{eqnarray*}
(Q,p)_{T_{1}}=0, \quad \forall p\in P_{k-1}(T_{1})
\end{eqnarray*}
and $v_b$ is extended to $T_1$.
Then
\begin{eqnarray*}
0=\left\langle v_{b}-v_{0}^{(1)},\bq\cdot\bn\right\rangle_{e_{1}}= \|v_{b}-v_{0}^{(1)} \|_{e_{1}}.
\end{eqnarray*}
Similarly
\begin{eqnarray*}
	\|v_{b}-v_{0}^{(2)}\|_{e_{1}}=0.
\end{eqnarray*}
By (\ref{eq6n})
\begin{eqnarray*}
\nabla v_{0}^{(1)}=\nabla v_{0}^{(2)}=\vec{0}.
\end{eqnarray*}
Thus
\begin{eqnarray*}
\|\nabla_wv\|^{2}_{T_{1}\cup T_{2}}=0,
\end{eqnarray*}
implies
\begin{eqnarray*}
\|\nabla v_{0}\|_{T_{1}\cup T_{2}}^{2}+\|v_{b}-v_{0}\|^{2}_{\partial T_{1}\cup \partial T_{2}}=0.
\end{eqnarray*}
Let
\begin{eqnarray*}
\nabla v_{0}^{(i)}=\begin{bmatrix}
q_{1}^{(i)}\\q_{2}^{(i)}
\end{bmatrix},
\end{eqnarray*}
where $q_{j}^{(i)}\in P_{k-1}(T_{i})$. Let $\vec{P}_{k-1}=[1,x,y,\dots,y^{k-1}], \vec{P}_{k+1}=[1,x,y,\dots, y^{k+1}]$, write $q_{j}^{(i)}(x,y)=\vec{P}_{k-1}\vec{a_{j}}^{(i)}$, where $\vec{a_{j}}^{(i)}$ is the coefficient vector of $q_{j}^{(i)}$. Then
\begin{eqnarray*}
\|\nabla v_{0}^{(i)}\|_{T_{i}}^{2}=(\vec{a_{1}}^{(i)})^{T}A_{i}\vec{a_{1}}^{(i)}+(\vec{a_{2}}^{(i)})^{T}A_{i}\vec{a_{2}}^{(i)}.
\end{eqnarray*}
Thus $\|\nabla v_{0}\|_{T_{1}\cup T_{2}}^{2}$ is a positive definite quadratic form in $\alpha=\begin{bmatrix}
a_{1}^{(1)}\\a_{2}^{(1)}\\a_{1}^{(2)}\\a_{2}^{(2)}
\end{bmatrix}$. Similarly
\begin{eqnarray*}
\|v_{b}-v_{0}^{(1)}\|_{i}^{2}, i=1,2,3, \|v_{b}-v_{0}^{(2)}\|_{e_{j}}^{2},j=1,4,5
\end{eqnarray*}
are positive definite quadratic forms in $\beta_j,j=1,\cdots, 6$, where $\beta_{1}-\beta_{3}$ are variables in the first $3$ quadratic forms and $\beta_{4}-\beta_{6}$ are variables in the second $3$ quadratic forms, respectively. Note that the leading coefficients of $(v_{b}-v_{0}^{(1)})|_{e_{1}}$ and $(v_{b}-v_{0}^{(2)})|_{e_{1}}$ are the same. To keep those variables independent, we remove that coefficient from $\beta_{4}$ and let
 $\beta=\begin{bmatrix}
\beta_{1}\\\vdots\\\beta_{6}
\end{bmatrix}$. Write $\vec{\gamma}=\begin{bmatrix}
\alpha\\\beta
\end{bmatrix}$, then
\begin{eqnarray*}
\|\nabla v_{0}\|^{2}_{T_{1}\cup T_{2}}+\|v_{b}-v_{0}\|_{\partial T_{1}\cup \partial T_{2}}^{2}=\vec{\gamma}^{T}A\vec{\gamma},
\end{eqnarray*}
where $A$ is positive definite. Since $\nabla_w v|_{T_{1}\cup T_{2}}$ is uniquely determined by $\alpha$ and $\beta$ by solving a linear system,
\begin{eqnarray*}
\|\nabla_w v\|_{T_{1}\cup T_{2}}^{2}=\vec{\gamma}^{T}B\vec{\gamma},
\end{eqnarray*}
where $B$ is positive semi-definite. Since $\vec{\gamma}^{T}B\vec{\gamma}=0$ implies $\vec{\gamma}^{T}A\vec{\gamma}=0$, $B$ is positive definite. By Lemma \ref{eqn3} and a scaling argument,
\begin{eqnarray*}
\|v_{b}-v_{0}\|^{2}_{\partial T_{1}\cup \partial T_{2}}\leq Ch_{T_{1}}\|\nabla_w v\|_{T_{1}\cup T_{2}}^{2}.
\end{eqnarray*}
\end{proof}

\smallskip

Lemma \ref{ML} is a direct result of Lemma  \ref{lastlemma}.

\end{document}